\newcommand{\Stab}{\operatorname{Stab}} 
\newcommand{\diag}{\operatorname{diag}} 
\newcommand{\und}[1]{\underline{#1}}
\def\zone{{\{0, 1\}^n}}
\newtheorem{theorem}{Theorem}[section]
\newtheorem{proposition}[theorem]{Proposition}
\newtheorem{lemma}[theorem]{Lemma}
\theoremstyle{definition}
\newtheorem{example}[theorem]{Example}
\newtheorem{remark}[theorem]{Remark}
\newtheorem{definition}[theorem]{Definition}
\mathchardef\za="710B  
\mathchardef\zb="710C  
\mathchardef\zg="710D  
\mathchardef\zd="710E  
\mathchardef\zve="710F 
\mathchardef\zz="7110  
\mathchardef\zh="7111  
\mathchardef\zvy="7112 
\mathchardef\zi="7113  
\mathchardef\zk="7114  
\mathchardef\zl="7115  
\mathchardef\zm="7116  
\mathchardef\zn="7117  
\mathchardef\zx="7118  
\mathchardef\zp="7119  
\mathchardef\zr="711A  
\mathchardef\zs="711B  
\mathchardef\zt="711C  
\mathchardef\zu="711D  
\mathchardef\zvf="711E 
\mathchardef\zq="711F  
\mathchardef\zc="7120  
\mathchardef\zw="7121  
\mathchardef\ze="7122  
\mathchardef\zy="7123  
\mathchardef\zf="7124  
\mathchardef\zvr="7125 
\mathchardef\zvs="7126 
\mathchardef\zf="7127  
\mathchardef\zG="7000  
\mathchardef\zD="7001  
\mathchardef\zY="7002  
\mathchardef\zL="7003  
\mathchardef\zX="7004  
\mathchardef\zP="7005  
\mathchardef\zS="7006  
\mathchardef\zU="7007  
\mathchardef\zF="7008  
\mathchardef\zW="700A  
\newcommand{\be}{\begin{equation}}
\newcommand{\ee}{\end{equation}}
\newcommand{\bea}{\begin{eqnarray}}
\newcommand{\eea}{\end{eqnarray}}
\newcommand{\beas}{\begin{eqnarray*}}
\newcommand{\eeas}{\end{eqnarray*}}
\def\*{{\textstyle *}}
\newcommand{\pa}{\partial}
\newcommand{\ti}{\times}
\newcommand{\Ll}{{\pounds}}
\def\bN{{\mathbf N}}
\def\sT{{\mathsf T}}
\def\xd{\mathrm{d}}
\def\dt{\xd_{\mathsf T}}
\newcommand{\Ci}{C^{\infty}}
\newcommand{\N}{\mathbb{N}}
\newcommand{\Z}{\mathbb{Z}}
\newcommand{\R}{\mathbb{R}}
\newcommand{\ao}{\mathbf{1}}
\newcommand{\n}{\nabla}
\newcommand{\id}{\on{id}}
\newcommand{\catname}[1]{\textnormal{\texttt{#1}}}
\newcommand{\cn}[1]{\textnormal{\texttt{#1}}}
\newcommand{\mn}{{\medskip\noindent}}
\newcommand{\no}{{\noindent}}
\def\on{\operatorname}
\newcommand{\w}{{\mathsf w}}
\newcommand{\Sgroup}{\mathbb{S}}
\newcommand{\Sn}{{\mathbb{S}_n}}
\DeclareMathOperator{\GL}{GL}
\def\bl{\big(}
\def\br{\big)}
\def\Bl{\Big(}
\def\Br{\Big)}
\def\sgn{\on{sgn}}
\def\nb{ \catname{WghtB}}
\def\nbn{ \catname{WghtB(n)}}
\def\nbbn{ \catname{WghtB}^\ze(n)}
\def\nm{ \catname{WghtB}^\ze}
\def\ssln{ \catname{SkewVB}_0[n]}
\def\ssvbn{\catname{SkewVB}[n]}
\def\vbn{ \catname{VB}[n]}
\def\vbne{ \catname{VB}^\ze[n]}
\def\vb{ \catname{VB}}
\def\cln{ \catname{VB}_0[n]}
\def\svbn{ \catname{SymVB}[n]}
\def\svbne{ \catname{SymVB}^\ze[n]}
\def\z2{{\Z_2^n}}
\newcommand{\az}{{z^A_\pi}}
\newcommand{\es}{{E^\zs}}
\def\cP{{\mathcal P}}
\def\fL{{\mathfrak L}}
\begin{document}
\title{\textbf{Equivalence functors\\ in graded supergeometry}\footnote{This research was partially funded by the National Science Centre (Poland) within the project WEAVE-UNISONO, No. 2023/05/Y/ST1/00043.}}
\date{}
\author{\\ Katarzyna  Grabowska$^1$\\ Janusz Grabowski$^2$\\ Miko\l aj Rotkiewicz$^3$
        \\ \\
         $^1$ \textit{Faculty of Physics,\
                University of Warsaw}\\
                $^2$ \textit{Institute of Mathematics,\
                Polish Academy of Sciences}\\
                $^3$ \textit{Institute of Mathematics,\
                University of Warsaw}
                }
\maketitle

\begin{abstract}
It has recently been proved that the category of $\bN$-manifolds of degree $n$, that is, $\N$-graded supermanifolds of degree $n$ for which the parity agrees with the gradation, is equivalent to the category of purely even $n$-tuple vector superbundles equipped with a suitable action of the symmetric group $\Sn$ permuting the vector bundle structures. This equivalence may be interpreted as a \emph{desuperization} of $\bN$-manifolds.
In the present paper, we place this result within a broader framework of graded structures on supermanifolds and explicitly describe several canonical equivalences between the corresponding categories in a purely geometric, constructive manner. The desuperization equivalence functor appears as a composition of some of these canonical equivalences. Our constructions are entirely canonical and rely on standard tools of supergeometry, including iterated tangent functors, parity reversion in vector superbundles, and the interpretation of $n$-tuple vector bundles in terms of commuting Euler vector fields associated with the underlying vector bundle structures.

\medskip\noindent
{\bf Keywords:} supermanifold; graded manifold; vector bundle; Euler vector field; homogeneity; symmetric group.
\par

\medskip\noindent
{\bf MSC 2020:} 58A50; 58C50; 57R22; 58A05.
\end{abstract}

\section{Introduction}
The term `graded manifold' appears in the literature with several distinct meanings, including those referring solely to a $\mathbb{Z}_2$-grading (parity), that is, to \emph{supermanifolds} (see, e.g.,~\cite{Kostant:1977}).
Graded supermanifolds constitute fundamental objects in many areas of geometry and mathematical physics, including the BV and AKSZ formalisms of quantum field theory, SUSY field theory, BRST theory, and supergravity.
Although supermanifolds are often referred to simply as \emph{graded manifolds}, we shall consistently use the term `supermanifold', reserving `graded manifold' for additional gradings, such as $\N^n$- or $\z2$-gradings. The graded structure is typically encoded in the structure sheaf of a ringed space, which is assumed to be a sheaf of graded (super)algebras. For an approach to graded (super)manifolds allowing arbitrary real weights, we refer to \cite{Grabowska:2024}. It is based on a concept of homogeneity associated with a \emph{weight vector field} and allows for homogeneous Poincar\'e Lemma, homogeneous Frobenius Theorem, and homogeneous Darboux theorems (for the latter see also \cite{Grabowski:2026}).

\mn Within the category of graded manifolds, non-negatively graded supermanifolds play a particularly important role. When the Grassmann parity of the coordinates coincides with the parity of their weight, one obtains an \emph{$\bN$-manifold}, a notion introduced in 2001 by \v{S}evera (see~\cite{Roytenberg:2002,Severa:2005}). Together with its variants, this class forms a central object of graded supergeometry and its applications (cf.~\cite{Aleksandrov:1997}), including $dg$-, $NQ$-, and $NP$-manifolds. A prominent example is provided by \emph{Courant algebroids}, originally introduced as a rather involved structure in purely even differential geometry \cite{Liu:1997}, and subsequently identified by Roytenberg \cite{Roytenberg:2002} as Hamiltonian systems on symplectic $\bN$-manifolds of degree~2.

\mn In the purely even setting, Grabowski and Rotkiewicz \cite{Grabowski:2012} introduced the notion of \emph{graded bundles}, a distinguished class of non-negatively graded manifolds. They showed that a graded bundle structure is equivalent to the existence of a smooth action of the multiplicative monoid $(\R,\cdot)$ on the underlying manifold. Such actions were termed \emph{homogeneity structures} and studied further in \cite{Grabowska:2021}. This observation was subsequently extended to supermanifolds by J\'o\'zwikowski and Rotkiewicz \cite{Jozwikowski:2016}.

\mn It was observed in \cite{Bruce:2016} that graded bundles admit a canonical \emph{linearization} procedure, which lowers the degree of the grading at the expense of introducing an additional vector bundle structure. This idea was applied to higher-order mechanics in \cite{Bruce:2015a}. Iteration of the linearization leads to a \emph{polarization} procedure producing \emph{symmetric $n$-tuple vector bundles} from graded bundles of degree~$n$, yielding an equivalence of categories \cite{Bruce:2016a}. Following the approach of \cite{Grabowski:2009}, an \emph{$n$-tuple vector bundle} (or \emph{$n$-vector bundle}) can be understood as a manifold equipped with $n$ vector bundle structures whose Euler vector fields commute, both in the purely even and in the supergeometric setting (cf.~\cite{Jozwikowski:2016}).

The term `symmetric' refers to the presence of an additional action of the symmetric group $\Sn$ permuting the Euler vector fields. A canonical example is provided by the iterated tangent bundle $\sT^{(n)}M=\sT\sT\cdots\sT M$, on which $\Sn$ acts by compositions of diffeomorphisms induced by the canonical flips $\zk_N:\sT\sT N\to\sT\sT N$ (see \cite{Bruce:2016a,Pradines:1974}).

Geometrically, polarization may be viewed as a procedure applied to admissible polynomial changes of local coordinates. The resulting coordinates arise from repeated differentiation, corresponding to the iterated application of the tangent functor. This leads to an enlarged manifold endowed with an $n$-fold vector bundle structure, from which the original graded bundle can be recovered as the submanifold of \emph{holonomic vectors}. This construction parallels the classical embedding of higher tangent bundles $\sT^nM$ into iterated tangent bundles $\sT^{(n)}M$ as the submanifold of holonomic vectors. The terminology `polarization' reflects its analogy with the polarization of homogeneous polynomials into multilinear maps.

\mn In the supergeometric setting, a related equivalence of categories was first established by Jotz \cite{JotzLean:2015}, who showed that $\bN$-manifolds of degree~2 are categorically equivalent to what she termed \emph{metric double vector bundles}. Her approach differs substantially from that of \cite{Bruce:2016a}. Closely related results were obtained independently in the PhD thesis of del~Carpio--Marek \cite{Carpio-Marek:2015}, formulated in terms of double vector bundles equipped with an involution. More recently, the result by Jotz was extended to arbitrary degrees by Heuer and Jotz \cite{Heuer:2024}; see also the related work of Vishnyakova \cite{Vishnyakova:2019,Vishnyakova:2024}.

The equivalence functor introduced in \cite{Heuer:2024}, termed the \emph{geometrization of an $\bN$-manifold}, establishes an equivalence between the category $\nbbn$ of $\bN$-manifolds of degree~$n$ and the category of purely even $n$-vector bundles, referred to there as \emph{symmetric}. We prefer the term \emph{skew-symmetric}, as transpositions in $\Sn$ are accompanied by sign changes, reserving the term \emph{symmetric} for the genuinely symmetric case studied in \cite{Bruce:2016a}. Accordingly, we refer to the procedure as \emph{desuperization}. Note also a recent paper on a functor between two categories of $\Z$-graded manifolds \cite{Guarin:2026}.

\mn the present paper aims to provide a concise, explicit, and purely geometric construction of the desuperization based on canonical tools of supergeometry. We work in a more general setting than that of $\bN$-manifolds, considering manifolds equipped with an $\N$-gradation not necessarily inducing the parity, referred to as \emph{$\N$-weighted bundles}, which may be regarded as super-analogues of graded bundles in the sense of Grabowski and Rotkiewicz. Since the existence of an equivalence alone is of limited value without an explicit construction, our functors are defined constructively. A preliminary outline of this idea already appeared in \cite{Bruce:2016a}, where polarization was combined with parity reversion.

\mn { We summarize our results, see Theorems~\ref{main}, \ref{m1}, and \ref{m2},  in the following diagram: }

\[
\xymatrix@C=7.5em@R=5.5em{
\cn{WghtB}(n)
  \ar@<0.6ex>[r]^{\mathcal{P}^{(n)}\ \text{(polarization)}}
  \ar@/^2pc/[rr]^{\text{reverse polarization}}
&
\cn{SymVB}[n]
  \ar@<0.6ex>[l]^{\text{diagonalization}}
  \ar[r]^{\Xi}
&
\cn{SkewVB}[n]
\\
\cn{WghtB}^\ze(n)
  \ar@{^{(}->}[u]
  \ar[r]^{\mathcal{P}^{(n)}}
  \ar@/_2pc/[rr]_{\text{desuperization}}
&
\cn{SymVB}^\ze[n]
  \ar@{^{(}->}[u]
  \ar[r]^{\Xi}
&
\cn{SkewVB}_0[n]
  \ar@{^{(}->}[u]
}
\]

\mn{
(We use a unified notation for the categories mentioned above.
The symbol $\cn{VB}$ denotes the category of vector bundles
(which by default are understood as super vector bundles),
$\catname{WghtB}$ the category of $\N$-weighted bundles,
and $\cn{VB}[n]$ the category of $n$-vector bundles.
The subscript $0$ indicates purely even objects,
while $\ze$ signifies that the parity is given by the total weight modulo~2. Moreover, the parentheses $(n)$ indicate the degree. In particular, $\cn{WghtB}^\ze(n)$ coincides with the category of $\bN$-manifolds of degree $n$.)

In short, the equivalence functor $\Xi$ between the categories of symmetric
and skew-symmetric $n$-vector superbundles arise as the restriction of the
total parity reversion functor, a result interesting \emph{per se}.
Our desuperization functor is obtained by composing the polarization functor,
restricted to the category of $\bN$-manifolds, with the functor~$\Xi$.
The result of Heuer and Jotz~\cite{Heuer:2024} is recovered by restricting
the reverse polarization to $\bN$-manifolds.}

\mn The paper is organized as follows. In Section 2, we introduce the concept of an $\N$-weighted bundle of degree $n$, being a sort of an $\N$-graded superbundle, \emph{via} the concept of a \emph{weight vector field}. Vector superbundles appear to be just $\N$-weighted bundles of degree 1. Then, we view $\bN$-manifolds as particular $\N$-weighted bundles, where weights determine the parity. Section 3 is devoted to analyzing the structure of $n$-vector bundles. In \cite{Heuer:2024}, $n$-vector bundles are understood as a certain functor; we prefer to use a more intuitive and geometric approach \emph{via} a sequence of commuting Euler vector fields. The \emph{total parity reversion functor} for $n$-vector bundles is discussed in Section 4. We get a generalization of the result of Voronov \cite{Voronov:2012} in this context, and a skew-symmetric $\Sn$-action appears.

{
In Section~5, we study the canonical example of the iterated tangent bundle $\sT^{(n)}M$, where the $\Sn$-action induced by the canonical flips is \emph{symmetric} in the sense that it acts trivially on the cores of all double vector bundle structures and permutes the corresponding Euler vector fields. This serves as our canonical model of a \emph{symmetric $n$-vector bundle}. In Section~6, we apply the total parity reversion to this model, thereby introducing  the notion of a skew-symmetric $n$-vector bundle.

Lemma~\ref{l:adapted_coord} guarantees the existence of a convenient system of adapted local coordinates on any symmetric $n$-vector bundle. This result is important in the proof of the canonical equivalence  between symmetric and skew-symmetric $n$-vector bundles.
}
Finally, in Section~7, we introduce the \emph{polarization functor}, a supergeometric generalization of the polarization construction of \cite{Bruce:2016a}. Composed with the equivalence between symmetric and skew-symmetric $n$-vector bundles, it yields the desuperization equivalence functor.

\section{Weight vector fields, homogeneity, and $\bN$-manifolds}

Let us consider the superspace $\R^{p|q}$ with standard global coordinates $(z^A)=(\chi^I,\zx^i)$, where $\chi^I$ are even and $\zx^i$ are odd, and associate with each $z^A$ a positive integer $w_A$. This can be encoded in the \emph{weight vector field}
$$\n_0=\sum_Aw_Az^A\pa_{z^A},$$
or the \emph{dilation} $h^0:\R\ti\R^{p|q}\to\R^{p|q}$,
$$h^0_t(z^A)=\bl t^{w_A}z^A\br.$$
It is easy to see that $h^0$ is a smooth action of the monoid $(\R,\cdot)$ of multiplicative reals,
\be\label{monoid} h^0_1=\id,\quad h^0_t\circ h_s^0=h_{ts}^0.\ee

\mn The weight vector field determines a concept of \emph{homogeneity} on the supermanifold $\R^{p|q}$: a tensor field $K$ is \emph{homogeneous with weight (degree) $w$} if $\Ll_{\n_0} K=w\cdot K$. Here, $\Ll$ denotes the Lie derivative. We will write $\w(K)=w$. In particular, a function $f$ is homogeneous of {weight } $w$ if $\n_0(f)=wf$ (equivalently, $f\circ h^0_t=t^wf$), and a vector field $X$ is homogeneous of {weight } $w$ if $[\n_0,X]=wX$. Of course, the concept of homogeneity is consistent with the original {weights, } as the coordinate $z^A$ is homogeneous of {weight } $w_A$. It is easy to see that for homogeneous functions $f,g$ their product $fg$ is homogeneous of {weight } $\w(f)+\w(g)$, and every homogeneous function is a polynomial in variables $(z^A)$ (cf. \cite[Lemma 2.1]{Grabowski:2012}).
 In particular, the {weights } of homogeneity of functions on $(\R^{p|q},\n_0)$ are non-negative integers.
 Structures $(\R^{p|q},\n_0)$ we will call \emph{$\N$-weighted (super)spaces.} If all weights satisfy $w_A\leq n$, we say that the space is of degree $n$. In particular, a space of degree $n$ is also of degree $n'$ for any $n'>n$.

\mn It is easy to see that the weight vector field is even and complete with the flow $\zf^s$, and induces the dilation, $h^0_{e^s}=\zf^s$. If we view $\R^{p|q}$ as a supermanifold, then morphisms $\zf:\R^{p|q}\to\R^{p|q}$
respecting the dilation, $\zf\circ h^0_t=h^0_t\circ\zf$ are exactly those relating the weight vector field in the sense that the diagram (remember that $\n_0$ is even, so $\n_0:\R^{p|q}\to\sT\R^{p|q}$ is a morphism of supermanifolds)
\[
\xymatrix@C+30pt@R+15pt{
\sT \R^{p|q} \ar[r]^{\sT\zf} & \sT \R^{p|q} \\
 \R^{p|q} \ar[u]^{\n_0}\ar[r]^{\zf} & \R^{p|q}\ar[u]_{\n_0}}
\]
is commutative.

\mn It is natural to extend the concept of a weighted space to that of an \emph{$\N$-weighted bundle}.

\begin{definition}
An \emph{$\N$-weighted bundle of degree $n$} is a fiber (super)bundle $\zt:F\to M$ {over a supermanifold $M$ } modelled on a weighted (super)space $(\R^{p|q},\n_0)$ of degree $n$. We require that the principal bundle $F$ is equipped with an atlas of local trivializations such that the transition maps respect the weight vector field in fibers (equivalently, they commute with the dilations $h_t$ in fibers). An $\N$-weighted bundle is called an $\bN$-manifold if $M$ is purely even and the parity of coordinates in $(\R^{p|q},\n_0)$ agrees with their weights: the weights of even coordinates are even, and the weights of odd coordinates are odd.
\end{definition}
Note that in this case, $M$ is canonically embedded in $F$ as the submanifold of zeros in fibers.  It follows that we have local trivializations $\zt^{-1}(U)\simeq U\ti\R^{p|q}$, {where $U$ ranges over  an open cover of the supermanifold $M$,} with coordinates $(x^a)=(u^s,z^A)$, and a globally defined weight vector field $\n$ which is vertical and reduces to $\n_0$ in fibers. In coordinates,
$$\n=\sum_Aw_Az^A\pa_{z^A},$$
which means that the only difference with the weighted space is the existence of base coordinates $u^s$, which are homogeneous of {weight } 0. Similarly, fiber dilations $h^0_t$ give rise to a monoid $(\R,\cdot)$-action $h:\R\ti F\to F$ on $F$ which reduces to $h^0$ in fibers. In particular, $\zt=h_0$ and transition maps commute with $h_t$.
Moreover, the generator of the group action $\R_+\ti E\to E$ is the weight vector field $\n=D_th: E\to\sT E$.
\emph{Morphisms} in the category $\nb$ of $\N$-weighted bundles {are defined as} smooth maps (morphisms of supermanifolds) relating the corresponding weight vector fields,
\be\label{mor}
\xymatrix@C+30pt@R+15pt{
\sT F^1 \ar[r]^{\sT\zf} & \sT F^2 \\
 F^1 \ar[u]^{\n^1}\ar[r]^{\zf} & F^2\ar[u]_{\n^2}.}
\ee
We will write it as $\zf_*(\n^1)\subset\n^2$. {Since we can reduce ourselves to the typical fiber, it is easy to see that this can be restated as $\zf:F^1\to F^2$ intertwines $h^1$ and $h^2$,
\be\label{mor1} \zf\circ h^1_t=h^2_t\circ\zf.
\ee}
Clearly, $\N$-weighted bundle of degree { $n$} form a full subcategory $\nbn$ in $\nb$ as well as $\bN$-manifolds of degree $n$ form a full subcategory $\nbbn$ in $\nm$.

\mn Again, homogeneous functions are locally polynomials in homogeneous fiber coordinates $z^A$ with coefficients being this time functions $f(u)$ on the base manifold $M$. Hence, the transition functions take the form
\be\label{transitions}
(u')^s=(u')^s(u),\quad (z')^{A} = \sum_{w_A=w_{A_1}+\cdots +w_{A_k}} z^{A_{1}} z^{A_{2}} \cdots z^{A_{k}}\cdot T^A_{A_{k} \cdots A_{2} A_{1}}(u).
\ee
Of course, $T^A_{A_{n} \cdots A_{2} A_{1}}(u)$ is totally symmetric in lower indices in the purely even case, and changes the sign appropriately to the sign obtained from permutations of supercoordinates in $z^{A_{1}} z^{A_{2}} \cdots z^{A_{n}}$. It is also clear that, dealing with a fiber bundle modelled on the weighted superspace $(\R^{p|q},\n_0)$ with an atlas of local trivializations for which the transition maps take the form (\ref{transitions}), we actually work with an $\N$-weighted bundle.
\begin{remark}
Our $\N$-weighted bundles in the purely even case have been called in \cite{Grabowski:2012} \emph{graded bundles} and smooth actions $h$ of the monoid $(\R,\cdot)$ -- \emph{homogeneity structures}. The main result there states that any homogeneity structure is associated with a graded bundle as above. This has been extended to supermanifolds by J\'o\'zwikowski and Rotkiewicz \cite{Jozwikowski:2016}. Note that $\bN$-manifolds were introduced by Roytenberg \cite{Roytenberg:2002} and {\v{S}}evera \cite{Severa:2005}. Actually, {\v{S}}evera defined it as a homogeneity structure for which $h_{-1}$ acts as the parity map, although without any proof that we get a weighted structure. An approach to $\bN$-manifolds \emph{via} coalgebra bundles appeared recently in \cite{Bursztyn:2025}. Note also a definition of a $\Z$-graded manifold by Th.~Voronov \cite{Voronov:2002} which is analogous to our $\N$-weighted bundles but with weights being arbitrary integers.
\end{remark}
We define \emph{$n$-fold $\N$-weighted bundles} as supermanifolds equipped with a sequence of $n$ commuting weight vector fields $\bl\n^1,\dots,\n^n\br$. It is easy to see that their sum, $\n=\n^1+\cdots+\n^n$, is also a weight vector field, called the \emph{total weight vector field}. Indeed, $\n$ is the generator of the homogeneity structure $h_t=h^1_t\circ\cdots\circ h^n_t$, where $h^i$, $i=1,\dots,n$, are the (commuting!) homogeneity structures associated with the weight vector fields. Moreover, one can prove that we can always find local coordinates that are simultaneously homogeneous with respect to all the weight vector fields. {The proof in the super case is analogous to that in the purely even case \cite{Grabowski:2012}.}

\section{Multiple vector bundles}

This section is devoted to $n$-fold vector superbundles  $(E; \n^1, \ldots, \n^n)$ in which each weight vector field $\n^k$ defines a super vector bundle on the supermanifold $E$.

In the standard (even) differential geometry, it is well known \cite{Grabowski:2009} that a vector bundle (VB) $\zt:E\to M$ is completely determined by its multiplication $h:\R\ti E\to E$ by reals or, equivalently, by the corresponding Euler vector field
$$\n_E(v)=\frac{\xd}{\xd t}\,\Big|_{t=1}h_t(v).$$
In local affine coordinates $(x^a)=(x^a,y^i)$, the Euler vector field reads
\be\label{El} \n_E=y^i\pa_{y^i}.\ee
 It extends easily to the case of vector superbundles, so \textbf{we will skip the suffix `super' in the rest of the paper.} The form (\ref{El}) of $\n_E$ means that vector bundles are nothing but $\N$-weighted bundles of degree 1, and the Euler vector field is a particular weight vector field.  Consequently, the category of vector bundles is in this way a full subcategory in the category $\nb$ of $\N$-weighted bundles.

Homogeneous functions of {weight } 1 are linear in the obvious sense (cf. the Euler's Homogeneous Function Theorem), and we call a vector field $X$ on $E$ \emph{linear} if it is homogeneous of {weight } 0, i.e., it commutes with $\n_E$, $[\n_E,X]=0$. Affine coordinates $(u^s,y^A)$ on vector bundles have weights 0 (basic) or 1 (linear coordinates in fibers). Moreover, $\zt=h_0$ is a smooth projection onto the base manifold $M=h_0(F)$ which is therefore canonically embedded in $E$. It can also be described as the set of zeros of the Euler vector field. Since the multiplication by reals in affine coordinates reads
$$h_t\bl u^s,z^A\br=(u^s,tz^A),$$
it is easy to see that the homogeneity structure $h$ is in this case \emph{regular}, i.e., for all $t\ne 0$ the derivative $\sT h_t$ at points of $M$ is injective on the kernel of $\sT h_0$. In other words,
$$\frac{\pa h}{\pa t}(0,\cdot):E\to\sT E$$
takes the value 0 exactly on $M$. An interesting observation in this context is the following characterization of homogeneity structures of vector bundles.

\begin{theorem}[\cite{Grabowski:2009}] A homogeneity structure $h$ on a manifold $F$ is regular if and only if it is a multiplication by reals for a vector bundle structure on $h_0:F\to M=h_0({ F})$.
\end{theorem}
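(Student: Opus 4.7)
I would prove the two implications separately, relying on the earlier theorem (Grabowski--Rotkiewicz, extended to the super setting by J\'o\'zwikowski--Rotkiewicz) which asserts that every smooth monoid action of $(\R,\cdot)$ on $F$ realizes $F$ as an $\N$-weighted bundle admitting adapted local coordinates in which the action is diagonal. With this in hand, the proof reduces essentially to a weight accounting.

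\emph{Vector bundle $\Rightarrow$ regular.} In local affine trivializations $(u^s,y^A)$ of a vector bundle over $M$, the scalar action reads $h_t(u,y) = (u,ty)$. A direct differentiation yields
\[
\frac{\pa h}{\pa t}\bl 0,(u,y)\br \;=\; \sum_A y^A\,\pa_{y^A}\big|_{(u,0)},
\]
which vanishes if and only if $y=0$, i.e., iff the point lies on $M$. Equivalently, at $m=(u,0)\in M$ the kernel of $\sT h_0|_m$ is spanned by the fiber directions $\pa_{y^A}$, and $\sT h_t|_m$ rescales each such vector by $t$, hence is injective for $t\ne 0$.

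\emph{Regular $\Rightarrow$ vector bundle.} Invoke the cited theorem to obtain adapted local coordinates $(x^a)$ in which $h_t(x^a) = t^{w_a} x^a$ for non-negative integer weights $w_a$, so that $M=h_0(F)$ is locally cut out by $x^a=0$ for all $a$ with $w_a\ge 1$. Direct differentiation gives
\[
\frac{\pa h_t^a}{\pa t}\bigg|_{t=0}(x) \;=\; \begin{cases} x^a & \text{if } w_a=1,\\ 0 & \text{if } w_a=0 \text{ or } w_a\ge 2,\end{cases}
\]
so $\frac{\pa h}{\pa t}(0,\cdot)$ vanishes at a point $x$ iff every coordinate of weight $1$ vanishes at $x$. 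By hypothesis this locus coincides with $M$, where every coordinate of positive weight vanishes. Comparing the two vanishing loci forces the complete absence of coordinates with $w_a\ge 2$. Consequently all weights lie in $\{0,1\}$, so $F$ is a vector bundle over $M$ and $h$ is precisely its scalar multiplication.

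\emph{Main obstacle.} The heart of the argument is the prerequisite diagonalization of $h$ in suitable local coordinates, a non-trivial result from \cite{Grabowski:2012, Jozwikowski:2016}; everything thereafter is weight bookkeeping. A secondary, expository subtlety is that the two formulations of regularity stated in the paper (injectivity of $\sT h_t$ on $\ker \sT h_0$ over $M$ for $t\ne 0$, versus vanishing of $\frac{\pa h}{\pa t}(0,\cdot)$ precisely on $M$) are not verbatim equivalent on an arbitrary graded bundle; only the second formulation distinguishes weight $1$ from weight $\ge 2$, and it should be taken as the effective definition in the proof.
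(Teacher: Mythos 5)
The paper offers no proof of this statement---it is quoted verbatim from \cite{Grabowski:2009}---so the comparison is with the original argument there rather than with anything internal to this text. Your proof is correct, but it takes a genuinely different route. You first invoke the later and more general diagonalization theorem of \cite{Grabowski:2012,Jozwikowski:2016} (every smooth $(\R,\cdot)$-action admits adapted homogeneous coordinates in which $h_t(x^a)=t^{w_a}x^a$) and then reduce everything to weight bookkeeping; the key computation $\frac{\pa}{\pa t}\big|_{t=0}t^{w_a}x^a=x^a$ for $w_a=1$ and $=0$ for $w_a=0$ or $w_a\ge 2$ is right, and correctly forces all fiber weights to equal $1$, whence the transition functions \eqref{transitions} become linear and $F$ is a vector bundle. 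The original proof could not proceed this way, since the general graded-bundle theorem postdates it: it works directly with the vertical map $v=\frac{\pa h}{\pa t}(0,\cdot):F\to\sT F$, using regularity to show that $v$ embeds $F$ onto a vector subbundle of $\sT F$ supported along $M$ and transporting the linear structure back to the fibers of $h_0$. Your route buys brevity at the cost of importing a substantially harder black box, and you should add one sentence confirming that the proof of the diagonalization theorem you cite does not itself rely on the vector-bundle case (it does not, but without that remark the argument looks potentially circular). Finally, your ``secondary subtlety'' is a genuine and worthwhile observation: the first formulation of regularity given in the paper (injectivity of $\sT h_t$ on $\ker\sT h_0$ along $M$ for $t\ne 0$) holds for \emph{every} graded bundle, since $\sT_m h_t$ acts on the positive-weight directions as $\diag(t^{w_a})$ with $t^{w_a}\ne 0$, so it cannot characterize vector bundles; only the second formulation (vanishing of $\frac{\pa h}{\pa t}(0,\cdot)$ exactly on $M$) distinguishes weight $1$ from weight $\ge 2$, and it is indeed the one that must serve as the effective definition.
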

\no The above observation substantially simplifies working with a vector bundle. In particular, VB-morphisms are just smooth maps between the total spaces intertwining the multiplications by reals, and vector subbundles are just submanifolds invariant with respect to the multiplication by reals. Equivalently, VB-morphisms are smooth maps relating the Euler vector fields, and vector subbundles are submanifolds $E_0$ to which the Euler vector field is tangent and its restriction is complete, i.e., $\n\,\big|_{E_0}$ is an Euler vector field on $E_0$. It is now obvious that the flows of linear vector fields consist of VB-morphisms. Note that our definition does not require any use of the Lie supergroups $\GL(p|q)$ and that this setting works well also for analogs of supermanifolds with other gradings and sign rules, e.g., $\z2$-supermanifolds \cite{Bruce:2024,Covolo:2014a,Covolo:2014b}.

\begin{example}
A canonical vector bundle associated with a supermanifold $M$ of dimension $(p|q)$ is its tangent bundle $\zt_M:\sT M\to M$. If $(U,x^a)$ is a coordinate chart in $M$ then $\zt_M^{-1}(U)\simeq U\ti\R^{p|q}$ is a coordinate chart in $\sT M$ with coordinates $(x^a,\dot x^b)$, where the parity of the linear coordinates $\dot x^a$ in $\R^{p|q}$ is the same as that of $x^a$. The transition map in $\sT M$ associated with a change of coordinates $y^a=y^a(x)$ in $M$ is the differential of the latter,
$$(x')^a=(x')^a(x),\quad (\dot x')^a=\dot x^b\frac{\pa (x')^a}{\pa x^b}(x).$$
Note that it is essential to keep the coefficients  $\frac{\pa (x')^a}{\pa x^b}$ to the right of the linear coordinates $\dot x^b$.
These transition maps respect the parity and the Euler vector field
$$\n_M=\dot x^a\pa_{\dot x^a},$$
so we get a vector { bundle } denoted $\sT M$. The coordinates $(x^a,\dot x^b)$ we call \emph{adapted} for $(x^a)$. Note that, analogously to the purely even case, $\sT M$ can also be obtained as the first jet bundle of `curves' in $M$ \cite{Bruce:2014}.
\end{example}

\mn Given an atlas of affine coordinates $(u^s,z^A)$, transition maps preserve the homogeneity {weight}, so can be written as
\be\label{vbtf}
(u')^s=(u')^s(u),\quad (z')^A=z^B\cdot F^A_B(u).
\ee
For vector superbundles $\zt:E\to M$, we will fix the convention that we write linear functions first and the coefficients, being basic functions, second. This form of transition maps immediately implies that they remain valid if we reverse the parity of the linear coordinates $(z^A)$. In this way, we get a new vector bundle
$\zP\zt:\zP E\to M$. It is obvious that if $\zf:E^1\to E^2$ is a VB-morphism, then the map $\zf$, having formally the same form (\ref{vbtf}) in affine coordinates with reversed parities in fibers, yields a VB-morphism $\zP\zf:\zP E^1\to\zP E^2$.
\begin{proposition}
The \emph{parity reversion} $\zP$ is an equivalence functor from the category of vector superbundles $\vb$ into the same category.
\end{proposition}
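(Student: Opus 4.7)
The plan is to verify that $\Pi$ is a well-defined functor whose square is the identity, which immediately gives an equivalence (in fact, an isomorphism of categories).

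First, I would define $\Pi E$ on objects by fixing an atlas of affine coordinates $(u^s, z^A)$ on $E$ with transition maps in the normal form \eqref{vbtf}, and declaring $\Pi E$ to be the bundle with the same base coordinates $u^s$, the same fiber coordinates $z^A$, but with the parity of each $z^A$ flipped. The crucial point is that the transition formula $(z')^A = z^B \cdot F^A_B(u)$, written with the adopted convention that linear coordinates appear to the left of basic coefficients, retains its form after the parity change: since both $(z')^A$ and $z^B$ flip parity simultaneously, the coefficient $F^A_B(u)$ still has parity $|A|+|B|\ \mathrm{mod}\ 2$ (computed with either the original or the flipped parities), so the transition data remain consistent. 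The cocycle condition involves only products of such basic coefficients among themselves, hence is inherited verbatim.

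On morphisms, a VB-morphism $\varphi: E^1 \to E^2$ is given locally by a base map $u\mapsto u'(u)$ together with a linear-in-fibers expression $(z')^A = z^B\,\Phi^A_B(u)$ of exactly the same shape as a change of trivialization. Applying identical formulas after parity reversion yields a well-defined morphism $\Pi\varphi: \Pi E^1 \to \Pi E^2$. Preservation of identities and compositions is automatic, as both are controlled entirely by the basic coefficient data and are unaffected by the parity of the fiber coordinates.

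The equivalence then follows from the identity $\Pi \circ \Pi = \mathrm{Id}_{\vb}$: reversing the parity of every fiber coordinate twice returns the original parity assignment, while the transition functions and morphism formulas are unchanged. Hence $\Pi$ has a strict inverse (itself) and is an isomorphism of categories, and in particular an equivalence.

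The only mildly subtle point, and the main step that requires care, is the independence of $\Pi E$ from the chosen trivializing atlas. This reduces to checking that compositions of transitions of the form $(z')^A = z^B F^A_B(u)$ produce the same signs regardless of whether the $z^A$ are regarded as even or odd. This is where the ordering convention, emphasized just before the statement, does the work: all signs arising in iterated transitions are governed by the parities of the basic coefficients $F^A_B(u)$, which remain the same under the reversion of the fiber parities.
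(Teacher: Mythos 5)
Your proposal is correct and follows essentially the same route as the paper, which proves the proposition implicitly in the discussion preceding it: the normal form $(z')^A=z^B\cdot F^A_B(u)$ of the transition maps (with linear coordinates written to the left of the basic coefficients) is manifestly stable under reversing the fiber parities, the same applies to morphisms, and involutivity of $\zP$ gives the equivalence. Your additional remark that atlas-independence reduces to the fact that iterated transitions only multiply basic coefficients is exactly the point the paper's ordering convention is designed to make automatic.
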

\no Our framework extends almost trivially to the case of multiple vector bundles.
\begin{definition}
An \emph{$n$-fold vector bundle} (\emph{$n$-vector bundle}, in short) is a supermanifold $E$ equipped with an ordered set of $n$ vector bundle structures $(\zt_1,\dots\zt_n)$, where $\zt_i:E\to M_i$, whose homogeneity structures (multiplications by reals) $h^i$ pairwise commute,
$$h^i_t\circ h^j_s=h^j_s\circ h^i_t\quad\text{for all}\quad t,s\in\R\ \text{and}\  i,j=1,\dots,n\,.$$
Equivalently, whose Euler vector fields $\n^i$ pairwise commute
$$ [\n^i,\n^j]=0, \quad \text{for all}\quad i,j=1,\dots,n\,.$$
\end{definition}
\no The equivalence of the above conditions comes from the fact that $h^j$ is the unique smooth extension of the one-parameter group of diffeomorphisms $(h_t^j)_{t>0}$ whose generator is $\n^j$.

\mn A fundamental observation is the following (cf. \cite{Grabowski:2009}).
\begin{proposition}
If $E$ is an $n$-vector bundle, then there is an atlas of coordinate charts on $E$ whose coordinates are $n$-homogeneous with {$n$-weights } in $\{0,1\}^n$. Moreover, the tangent $\sT E$ and the cotangent bundle $\sT^*E$ are canonically $(n+1)$-vector bundles.
\end{proposition}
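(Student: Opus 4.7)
The plan is to prove the two assertions in turn, both relying on the characterization of vector bundles via regular homogeneity structures and on the naturality of the tangent and cotangent functors.

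For the existence of adapted coordinates I would argue by induction on $n$. The case $n = 1$ is just the standard affine chart $(u^s, z^A)$ on a vector superbundle, in which $\n^1 = z^A\,\pa_{z^A}$, so that $\w_{\n^1}(u^s) = 0$ and $\w_{\n^1}(z^A) = 1$. For the inductive step, I would start from a $\n^1$-adapted chart and use that $[\n^1,\n^i] = 0$ for every $i \geq 2$ forces each $\n^i$ to preserve the $\n^1$-weight decomposition, whence
\[
\n^i = (A^i)^s(u)\,\pa_{u^s} + z^B\,(F^i)^A_B(u)\,\pa_{z^A}.
\]
In particular, the zero section $M_1 = \{z = 0\}$ is preserved by every $h^i$, and the restrictions $\n^i|_{M_1}$ are commuting Euler vector fields of vector bundle structures on $M_1$; regularity of the restricted homogeneity structures is inherited from the ambient ones, as in \cite{Grabowski:2012}. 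The inductive hypothesis then supplies base coordinates on $M_1$ with multi-weights in $\{0,1\}^{n-1}$ for $(\n^2,\dots,\n^n)|_{M_1}$, and transporting them through the $\zt_1$-trivialization they acquire $\n^1$-weight $0$. Finally, the commuting even fiber endomorphisms $F^i(u)$ have eigenvalues in $\{0,1\}$ -- because each $\n^i$, as Euler vector field of a genuine vector bundle, generates a regular homogeneity structure also on each fiber of $\zt_1$ -- so the $z^A$ can be rechosen as a common $0/1$ eigenbasis. This yields an atlas with multi-weights in $\{0,1\}^n$.

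For the tangent bundle I would exhibit the $n+1$ candidate Euler vector fields $\dt\n^1,\ldots,\dt\n^n$ and $\n_{\sT E}$ on $\sT E$, where $\dt\n^i$ is the complete (tangent) lift of $\n^i$ and $\n_{\sT E}$ is the Euler vector field of the standard tangent bundle $\zt_E : \sT E \to E$. By construction, $\dt\n^i$ is the Euler vector field of the vector bundle structure $\sT h^i : \R \times \sT E \to \sT E$ obtained by applying the tangent functor to $h^i$. Pairwise commutativity $[\dt\n^i, \dt\n^j] = \dt[\n^i, \n^j] = 0$ follows from the Lie-algebra homomorphism property of the complete lift, and $[\dt\n^i, \n_{\sT E}] = 0$ from the standard fact that tangent lifts are linear vector fields on a tangent bundle. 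The cotangent case is formally identical, using the Hamiltonian (phase) lifts $\wh{\n}^i$ -- which are the Euler vector fields of the cotangent-lifted vector bundle structures on $\sT^* E$ dual to $\zt_i$ along the $i$-th direction -- together with the Euler vector field $\n_{\sT^* E}$ of $\zp_E : \sT^* E \to E$.

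The main obstacle is the inductive step of the first assertion: one must verify, in the super setting, that the restriction of a regular homogeneity structure to the zero section of a commuting regular homogeneity structure remains regular, and that commuting even linear endomorphisms arising from Euler vector fields of vector bundles admit a simultaneous $0/1$ eigenbasis. Both are super-analogues of results of \cite{Grabowski:2012} already invoked at the end of Section~2 for $n$-fold $\N$-weighted bundles, specialized here to the case in which all weights lie in $\{0,1\}$. Once this is in place, the second assertion is formal, depending only on the functorial naturality of $\sT$ and $\sT^*$.
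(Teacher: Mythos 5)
The paper offers no proof of this Proposition at all: it is stated with a bare reference to \cite{Grabowski:2009}, and the intended argument is the one already sketched at the end of Section~2 --- commuting weight vector fields admit simultaneously homogeneous local coordinates, and since each $\n^i$ is the Euler vector field of a genuine vector bundle, the $\n^i$-weight of any coordinate in a chart covering the base is forced to be $0$ or $1$, so the multiweights land in $\zone$. Your treatment of the second assertion is essentially the paper's own construction from Section~5 (tangent lifts commute and are linear over $\zt_E$), with one normalization slip on the cotangent side: the phase lift of an Euler vector field is not itself an Euler vector field (in coordinates it reads $y\pa_y-p_y\pa_{p_y}$, so it has a weight $-1$ component); the Euler vector field of the dual structure on $\sT^*E$ is the phase lift \emph{plus} $\n_{\sT^*E}$. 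This is harmless but should be said.

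The genuine gap is in the last step of your induction. In a chart adapted only to $\n^1$, write $\n^i=(A^i)^s(u)\pa_{u^s}+z^B(F^i)^A_B(u)\pa_{z^A}$. The condition $[\n^i,\n^j]=0$ does \emph{not} make the matrices $F^i(u)$ commute pointwise: computing the bracket, $[F^i,F^j]$ equals a difference of derivatives of the $F$'s along the base components $A^i,A^j$, which vanishes only where the $A$'s do (e.g.\ over the total base $M$). Indeed, under a $\n^1$-linear change of chart $z'=zG(u)$ the $F^i$ transform like connection coefficients, so their pointwise commutator is not invariant and is zero precisely in the adapted chart you are trying to construct --- assuming it at the outset is circular. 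For the same reason, $h^i_t$ does not preserve the fibers of $\zt_1$ away from zeros of $A^i$, so your justification that each $\n^i$ ``generates a regular homogeneity structure on each fiber of $\zt_1$'', hence that each $F^i(u)$ is diagonalizable with spectrum in $\{0,1\}$, fails for general $u$. The standard repair avoids eigenbases altogether: for each coordinate $f$ near the total base and each $i$, replace $f$ by the pair $f\circ h^i_0$ and $D_t\vert_{t=0}\bl f\circ h^i_t\br$, which are homogeneous of $\n^i$-weight $0$ and $1$ and still have independent differentials along $M$; because the $h^i_t$ commute globally, these operations for different $i$ commute, and iterating over $i=1,\dots,n$ produces the multihomogeneous chart directly. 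With that substitution your induction (and in fact a non-inductive, one-pass version of it) goes through.
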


\no Such coordinates we will call \emph{(multi)affine}. Like in the case of a single vector bundle, it is convenient to distinguish homogeneous coordinates $(u^s)$ of {weight } $0^n\in\zone$ (base coordinates) from the rest of the affine coordinates $(z^A)$ with non-zero {weights, }
$$w_A=\bl w^1_A,\dots,w^n_A\br\in\zone\setminus\{0^n\}.$$
The base coordinates are coordinates on the \emph{total base} $M=\bigcap_{i}M_i$.
The transition functions look exactly like in (\ref{transitions}) with the only difference that $w_A\in\zone\setminus\{0^n\}$. This implies automatically that any coordinate $z^A$ appears in the products $z^{A_{1}} z^{A_{2}} \cdots z^{A_{k}}$ not more than once, and shows that our $n$-vector bundles coincide with the $n$-tuple vector bundles described by T.~Voronov \cite{Voronov:2012} and (for $n=3$) the triple vector bundles studied by K.~Mackenzie \cite{Mackenzie:2005}.

\mn It is clear that $n$-vector bundles form a category $\vbn$, in which morphisms between
\newline $\bl E,\n^1,\dots,\n^n\br$ and $\bl E',(\n')^1,\dots,(\n')^n\br$ are smooth maps $\zf:E\to E'$  which are respecting the collective homogeneity, i.e., they are VB-morphisms for every pair $\bl\zt^i,(\zt')^i\br$. We will also consider purely even (traditional) $n$-vector bundles, which form a full subcategory $\cln$ in $\vbn$.

\mn Note that in the above definition, the order of the vector bundle structures does matter. We can permute the position of the Euler vector fields using any permutation $\zs$ from the symmetric group $\Sn$ understood as the group of bijections of the set $\{1,\dots,n\}$:
the permuted Euler vector fields are ordered as $\bl E,\n^{\zs(1)},\dots,\n^{\zs(n)}\br$, which we denote shortly $\es$ (cf.\cite{Bruce:2016a}). In particular, the {weight } of $z^A$ in $\es$ is
$$
{w}_A^\zs=\bl w_A^{\zs(1)},\dots,w_A^{\zs(n)}\br
$$
and, to indicate this fact, the coordinate $z^A$ in $\es$ we will denote ${z}^A_\zs$. As the induced action of $\Sn$ on sequences is a right action, we have
\be\label{wzs1}z^A_{\zs'\zs}=(z^A_{\zs'})_{\zs}\quad\text{and}\quad w_A^{\zs'\zs}=\bl w_A^{\zs'}\br^\zs.\ee
Actually, for any $\zs\in\Sn$, this defines an equivalence \emph{permutation functor}
\be\label{Pzs}P^\zs:\vbn\to\vbn,
\ee
which associates $\es$ to $E$ such that
\be\label{Pcomp}P^{\zs'\zs}=P^{\zs}\circ P^{\zs'}, \quad {E}^{\zs'\zs}={\bl E^{\zs'}\br}^{\zs}.
\ee
Of course, $E$ and $\es$ are the same supermanifolds, only the $n$-vector bundle structures are permuted. Moreover, with any morphism of $n$-vector bundles $\zf:E_1\to E_2$ there is associated the whole family of $n$-vector bundle morphisms $P^{\zs}(\zf):E_1^\zs\to E_2^\zs$, enumerated by $\zs\in\Sn$, since $\zf_*(\n^i)\subset(\n')^i$ for all $i$ is the same condition as $\zf_*(\n^{\zs(i)})\subset(\n')^{\zs(i)}$ for all $i$.
\begin{remark}\label{zs0} All $n$-vector bundle morphisms $P^{\zs}(\zf):E_1^\zs\to E_2^\zs$ refer to the same morphism $\zf:E_1\to E_2$ of supermanifolds, only the ordering of vector bundle structures changes.
If the context is clear, we will often skip the part $P^\zs$ and write simply $\zf$, understanding this as the above family of $n$-vector bundle morphisms.
\end{remark}


\mn Like $\bN$-manifolds form a subcategory of $\N$-weighted bundles, in the category $\vbn$ of $n$-vector bundles, we can also distinguish its subcategory, relating {multi-weights} to the parity.
\begin{definition}
An $n$-vector bundle in which affine coordinates of weight $\za\in\zone$ have the parity $|\za|=\sum_i\za^i$ we call an \emph{$[n]$-vector bundle}. { The corresponding category is denoted by  $\catname{VB}^\ze[n]$.} 
\end{definition}
\begin{remark}
The concept of a \emph{double vector bundle} (DVB) (i.e., a $2$-vector bundle) goes back to Pradines \cite{Pradines:1974} (see also \cite{Chen:2014,Konieczna:1999}). It is sometimes characterized as a `vector bundle in the category of vector bundles', but an extension to $n$-vector bundles is rather uneasy in this framework. In \cite{Heuer:2020,Heuer:2024}, $n$-vector bundles are described as a certain functor. This is elegant, but does not seem to be very operational for applications.
\end{remark}
\no Given an $n$-vector bundle $E$ and $\za\in\{ 0,1\}^n$, the submanifold
$$E_\za=\bigcap_{\za^i=0}M_i$$ is itself an $\vert \za\vert$-vector bundle with respect to the (restricted) Euler vector fields $\{\n^k:\za^k=1\}$ determining vector bundles
$$\zt_\za^k:E_\za\to E_{\za'_k},$$
where $\za'_k=(\za^1,\dots,\za^{k-1},0,\za^{k+1},\dots,\za^N)$.
Moreover, the projections $\zt_\za^k$ are morphisms of multivector bundles. In this way, we get the \emph{characteristic diagram} of the $n$-vector bundle $E$,
which is a commutative diagram of multivector bundles with $2^n$ vertices $E_\za$ and vector bundle morphisms $\zt_\za^k$. For instance, the characteristic diagram for a triple vector bundle (cf. \cite{Grabowski:2009,Mackenzie:2005}) looks like

\begin{equation}\label{triple} \xymatrix@C-3pt@R+5pt{
               &  M_1=E_{011}\ar[dd]\ar[dl]   &           &  E=E_{111} \ar[ll]_{\zt_1} \ar[dd]^{\zt_3} \ar[dl]_{\zt_2}
  \\
E_{001}\ar[dd] &            & M_2=E_{101}\ar[ll]\ar[dd] &
  \\
               &  E_{010} \ar[ld] &                 &  M_3=E_{110} \ar[ll] \ar[ld]
  \\
M=E_{000}        &                   &      E_{100}\ar[ll] & }
\end{equation}

\mn\begin{example}\label{triv}
For a manifold $M$ and $\za\in\zone$, $\za\ne 0$, let $V(\za)$ be a vector bundle over $M$ and $\n^\za$ its Euler vector field. Then, each manifold
$$
E_\za={ \prod_{\zb\le\za}{}_{{}_M}V(\zb)}$$ is canonically a vector bundle over $E_{\za'_k}$ if $\za_k=1$, and the corresponding Euler vector field is
$$\n^\za_k=\sum_{\zb\le\za,\ \zb_k=0}\n^\zb,$$
so we get the characteristic diagram for the $n$-vector bundle $E=E_\ao$ with $n$ vector bundle structures
$\zt_i:E\to M_i=E_{\ao'_i}$. Such $n$-vector bundles we call \emph{trivial}.
\end{example}
\no An important observation is that any $n$-vector bundle $E$ is locally (over some open $U\subset M$) trivial. Indeed, for affine coordinates $(u^s,z^A)$ on $E$ over $U$ and $\za\in\zone$, $\za\ne 0$, we define locally $E_U[\za]$ as the submanifold
\be \label{df:E[alpha]}
E_U[\za]=\big\{(u^s,z^A)\,|\, z^A=0\ \text{if}\  \w(z^A)\ne\za\big\}.
\ee

It is a trivial vector bundle over $U$.
It is easy to see that the transition maps,
\be\label{trans}(z')^{A} = \sum_{w_A=w_{A_1}+\cdots +w_{A_k}} z^{A_{1}} z^{A_{2}} \cdots z^{A_{k}}\cdot T^A_{A_{k} \cdots A_{2} A_{1}}(u),\ee
respect the condition $z^A=0$ for $w_A\ne\za\in\zone$, determining therefore canonically a submanifold $E[\za]$ in $E$, and polynomials in (\ref{transitions}) reduce to linear functions, so we get a canonical vector bundle structure $\zt^\za:E[\za]\to M$. The corresponding module of sections over $\Ci(M)$ consists of homogeneous functions of {weight } $\za$ on $E$.

\mn Similarly, for $i,j=1,\dots,n$, $i\ne j$, denote with $C^{ij}_E$ the submanifold of $E$ defined locally by
$$C^{ij}_U=\big\{(u^s,z^A)\,|\, z^A=0\ \text{if}\ w_A^i\ne w_A^j\big\}.$$
It is easy to see that it is a vector bundle over $M_{ij}=M_i\cap M_j$. It is called the \emph{$(i,j)$-core} of $E$. The corresponding module over $\Ci(M_{ij})$ consists of homogeneous functions which are simultaneously of {weight } 1 with respect to $\n^j$ and $\n^j$.
\begin{remark}
It is well known that $E$ is isomorphic as an $n$-vector bundle with the trivial $n$-vector bundle
$${ \prod_{\za\ne 0}{}_{{}_M}E[\za]}$$ 
from Example \ref{triv}, { where $E[\za]$ are defined locally by \eqref{df:E[alpha]}}, however, in a non-canonical way (cf. \cite{Bonavolonta:2013,Gracia:2009}). This can be viewed as an analog of the { Gaw\c{e}dzki-Batchelor } theorem \cite{Batchelor:1979,Gawedzki:1977} saying that any supermanifold is (noncanonically) diffeomorphic to a superized purely even vector bundle, but the proof is much easier in this case.
\end{remark}

\section{Parity reversion in $n$-vector bundles}
Let us observe that the parity reversion functor $\zP$ has its $n$-vector bundle analog (see \cite{Voronov:2012}). Namely, for an $n$-vector bundle $(E,\n^1,\dots,\n^n)$, we apply the parity reversion functor subsequently with respect to the VB-structures $\zt_n,\zt_{n-1},\dots,\zt_1$. Consequently, we obtain a new $n$-vector bundle $(\zP E,\n^1,\cdots,\n^n)$, where the Euler vector fields $\n^i$ remain formally the same but homogeneous coordinates $z^A$ change their parity. More precisely, local affine coordinates $(z^A)$ turn into $(\az)$ with the same {weight } $w_A$ and with a change in the parity: it remains the same for $|w_A|$ even and is reversed for $|w_A|$ odd. Applying the parity reversion functor successively to all vector bundle structures, we get the \emph{total parity reversion functor} $\zP$ from the category $\vbn$ of $n$-vector superbundles into $\vbn$ defined by
\be\label{nzP}\zP=\zP^1\circ\zP^2\circ\cdots\circ\zP^n,\ee
where $\zP^i$ is the parity reversion in the vector bundle $\zt_i:E\to M_i$. This makes sense, as $\zP^i E$ remains an $n$-vector bundle with formally the same commuting Euler vector fields, since the parity of $z\pa_z$ remains even if we change the parity of $z$. Since each $\zP^i$ is an equivalence functor in the category of vector bundles, $\zP$ is an equivalence functor in the category $\vbn$ of $n$-vector bundles; the inverse functor is
$$\zP^{-1}=\zP^n\circ\zP^{n-1}\circ\cdots\circ\zP^1.$$
In the case when $E$ is an $[n]$-vector bundle, i.e., the parity of $z^A$ is $|w_A|$, the total parity reversion $\zP$ turns $E$ into a purely even $n$-vector bundle $\zP E$. This way, we get the following.
\begin{proposition} The functor (\ref{nzP}) is an equivalence functor from the category $\vbn$ of $n$-vector bundles into the same category. Reduced to the full subcategory $\vbne$ of $[n]$-vector bundles,
$\zP$ induces an equivalence of the category $\vbne$ with the category $\cln$ of purely even $n$-vector bundles.
\end{proposition}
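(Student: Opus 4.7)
The plan is to verify that the iterated parity reversion $\zP=\zP^1\circ\zP^2\circ\cdots\circ\zP^n$ is well-defined as a functor $\vbn\to\vbn$, and then deduce the equivalence statement from the fact, already recorded for single vector bundles, that each constituent $\zP^i$ is an equivalence on $\vb$. The core conceptual question is whether $\zP^i$, which is \emph{a priori} an operation on one fixed VB structure, preserves the remaining $n-1$ commuting vector bundle structures on $E$, so that the result still lies in $\vbn$.

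First I would fix multi-affine coordinates $(u^s,z^A)$ on $E$ and examine the effect of $\zP^i$: parity reversion with respect to $\zt_i$ flips the parity of precisely those coordinates $z^A$ for which $w_A^i=1$, leaving the weight $w_A$ itself unchanged. Crucially, the formal expression of any other Euler field $\n^j=\sum_A w_A^j\, z^A\pa_{z^A}$ remains valid on $\zP^i E$, since each summand $z^A\pa_{z^A}$ is even regardless of the parity of $z^A$. Hence $\n^j$ is still a well-defined even vector field with the same weight function, and the relation $[\n^i,\n^j]=0$ continues to hold term-by-term. A morphism $\zf:E^1\to E^2$ in $\vbn$ has transition and intertwining rules of the form (\ref{transitions}); under the convention that scalar coefficients are placed to the right of the linear coordinates (as fixed just before formula (\ref{vbtf})), these same formulae with unchanged structure constants define a morphism $\zP^i\zf:\zP^i E^1\to\zP^i E^2$ of $n$-vector bundles.

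Next I would address the inverse. A direct computation in coordinates shows that $\zP^i$ and $\zP^j$ commute: their composition flips the parity of a coordinate $z^A$ precisely when $w_A^i+w_A^j$ is odd, a condition symmetric in $i,j$. Moreover $\zP^i\circ\zP^i=\id$, since parity is flipped an even number of times. Consequently $\zP$ is an involution, and the reversed composition $\zP^n\circ\zP^{n-1}\circ\cdots\circ\zP^1$ is a two-sided inverse of $\zP$. This actually exhibits $\zP$ as an isomorphism of categories on $\vbn$, and \emph{a fortiori} as an equivalence.

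Finally, I would handle the restriction to $[n]$-vector bundles. For $E\in\vbne$, a coordinate of weight $w_A$ has parity $|w_A|$, and $\zP$ flips its parity exactly $|w_A|$ times (once for every $i$ with $w_A^i=1$), producing total parity $2|w_A|\equiv 0\pmod 2$; thus $\zP E\in\cln$. The same calculation read in reverse shows that $\zP$ sends $\cln$ into $\vbne$, and since $\zP$ squares to the identity the restriction is an equivalence between these full subcategories. I expect the only genuinely delicate point in the whole argument to be the bookkeeping of sign conventions in the transition functions (\ref{transitions}) after successively reversing parities with respect to different VB structures; the stated convention that linear coordinates precede their coefficients is precisely what makes the reversed transition data well-defined without accruing extra signs.
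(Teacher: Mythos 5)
Your overall strategy coincides with the paper's: factor $\zP$ into the single-structure reversions $\zP^i$, observe that each $\zP^i$ leaves the remaining Euler fields intact because $z\pa_z$ is even irrespective of the parity of $z$, conclude the equivalence from the single-VB case, and handle the restriction to $\vbne$ by the parity count $|w_A|+|w_A|\equiv 0 \pmod 2$. However, one of your intermediate claims is false: the functors $\zP^i$ and $\zP^j$ do \emph{not} commute, and consequently $\zP$ is \emph{not} an involution. Your argument for commutativity only tracks which coordinates have their parity flipped; but parity reversion also rewrites the transition functions, and that rewriting is order-dependent, because moving one fiber coordinate past another costs a sign that depends on their parities \emph{at the moment of the move}. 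The paper's Example \ref{aba} exhibits this: for a double vector bundle with transition $(x')^{(1,1)}=x^{(1,1)}+x^{(1,0)}x^{(0,1)}$, the bundle $\zP^1\zP^2E$ carries the transition (\ref{ab}) with a minus sign, while $\zP^2\zP^1E$ carries it with a plus sign; the two are canonically isomorphic via the core sign flip $\zF$ of (\ref{zF}), but not equal. This non-commutativity is not a technicality to be argued away --- it is exactly the source of the skew-symmetric $\Sn$-action constructed later in the paper.

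Fortunately the proposition does not need commutativity. The identity $\zP^i\circ\zP^i=\id$, which you also state and which is correct (the coefficients multiplying the $\zt_i$-linear factor are $\zt_i$-basic, so two successive reversions restore both the parities and the transition data), already gives by telescoping that $(\zP^n\circ\cdots\circ\zP^1)\circ(\zP^1\circ\cdots\circ\zP^n)=\id$ and $(\zP^1\circ\cdots\circ\zP^n)\circ(\zP^n\circ\cdots\circ\zP^1)=\id$. Hence the reversed composition is a strict two-sided inverse of $\zP$, exactly as the paper asserts, and $\zP$ is an isomorphism of categories; the restriction to $\vbne\leftrightarrow\cln$ then follows from your parity count, which applies equally to $\zP$ and to its inverse. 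If you delete the sentences claiming that $\zP^i$ and $\zP^j$ commute and that $\zP$ squares to the identity, and justify invertibility by the telescoping argument instead, your proof is correct and agrees with the paper's.
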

\no With every multi-affine coordinates $(u^s,z^A)$ in $E$, we can associate in an obvious way multi-affine coordinates $(u^s,\az)$ in $\zP E$, where the {weight } of $\az$ is $w_A$ but the parity $\ze(\az)$ of $\az$ is
\be\label{parity}
\ze(\az)=\ze(z^A) + |w_A|.
\ee
Note, however, that the transition maps for the new coordinates will change.

\begin{example}\label{aba} For the total parity reversion of a double vector bundle $E$ with local homogeneous coordinates $(x^\za)$, where $\za\in {\{0, 1\}^2}$ is the {weight } of $x^\za$ and the parity of $x^\za$ is $|\za|$, we get another double vector bundle with local homogeneous coordinates $(x^\za_\zp)$. The {weight } of $x^\za_\zp$ is still $\za$,  but the parity is purely even. For instance, the transition map
$$(x')^{(1,1)}=x^{(1,1)}+x^{(1,0)}x^{(0,1)}$$
goes into
\be\label{ab}(x'_\zp)^{(1,1)}=x_\zp^{(1,1)}-x_\zp^{(1,0)}x_\zp^{(0,1)}.\ee
Indeed, to perform $\zP^2$ we write $x^{(1,0)}x^{(0,1)}$ as $-x^{(0,1)}x^{(1,0)}$, then we apply $\zP^2$ and get $-x_\zp^{(0,1)}x^{(1,0)}$, where $x_\zp^{(0,1)}$ is even, we transpose coordinates and get $-x^{(1,0)}x_\zp^{(0,1)}$, and finally we perform $\zP^1$.
\end{example}

\mn It is clear that we can do a similar total reversion for an arbitrary order of subsequent parity reversions: for $\zs$ being a permutation from the symmetric group $\Sn$, we put \be\label{nzP1}\zP^\zs=\zP^{\zs(1)}\circ\zP^{\zs(2)}\circ\cdots\circ\zP^{\zs(n)}.\ee
In other words,
$$\zP^\zs =P^{\,\zs^{-1}}\circ\zP\circ P^{\,\zs}$$
is another functor $\zP^\zs:\vbn\to\vbn$. We will show that the $n$-vector bundles $\zP E$ and $\zP^\zs E$ are generally different, but canonically isomorphic. If in Example \ref{aba} we combine (\ref{ab}) with the sign change for coordinates of {weight } $(1,1)$, i.e., with the multiplication by sign in the core bundle $C^{12}_E$, we get a map
$$\zF:\zP^1\zP^2 E\to \zP^2\zP^1 E$$
which acts as the identity on  $x^{(0,0)}_\pi$, $x^{(1,0)}_\pi$, and $x^{(0,1)}_\pi$, and maps $x^{(1,1)}_\pi$ into $-x^{(1,1)}_\pi$, which is consistent with the transition map, i.e., is an isomorphism of double vector bundles, however, different from the identity.

\mn It is easy to see (cf. \cite{Voronov:2012}) that reversing the order of two subsequent parity reversions results in an isomorphism on $n$-vector bundles
\be\label{zF}\zF^{\zs_0}_E:\zP E^{\zs_0}\to\bl\zP E\br^{\zs_0},\ee
where $\zs_0=(i,i+1)\in\Sn$ is the transposition of $i$ and $(i+1)$, given by
$$\bl\az\br_{\zs_0}\circ\zF^{\zs_0}_E=(-1)^{w_{A}(i)w_A(i+1)}\cdot\bl z^A_{\zs_0}\br_\pi\,.
$$
{ (Recall that $\pi$ relates to the total parity reversion in $n$-vector bundles, and that we follow the definitions of the symbols $\az$ and $z^A_{\zs}$ given next to \eqref{parity} and \eqref{wzs1}. In particular, $(u^s_{\zs_0}, \bl \az \br_{\zs_0})$ (respectively, $(u^s_{\zs_0}, \bl z^A_{\zs_0} \br_\pi)$) are multi-affine coordinate systems on $\bl \zP E \br^{\zs_0}$ (respectively, on $\zP E^{\zs_0}$) inherited from $(u^s, z^A)$ on the $n$-vector bundle $E$.)}
Indeed, if we consider a homogeneous monomial
\be\label{pol}W_\za(u,z)=z^{A_{1}} z^{A_{2}} \cdots z^{A_{k}}T(u),\ee
of {weight } $\za\in\zone$, then there is no difference between applying $\zP^1\circ\zP^2$ and $\zP^2\circ\zP^1$ to $W_\za$ in the case when at least one of $\za(1),\za(2)$ is 0. If, however, $\za(1)=\za(2)=1$ and, say, $w_{A_1}(1)=w_{A_2}(2)=1$, then we go with $z^{A_{2}}$ to the front to change the parity and back, but in one case we go over $z^{A_{2}}$, while over $z^{A_2}_\zp$ in the other, that produces the difference by sign. This justifies the fact that (\ref{zF}) respects the transition maps. Composing transpositions, we get a similar $\zF^\zs_E$ for all $\zs\in\Sn$:
\be\label{zF2}
\zF^{\zs}_E:\zP E^{\zs}\to\bl\zP E\br^{\zs},  \quad \zs\in\Sn.
\ee
The recipe is that changing the order of adjacent elements in the {weight } $w_A$ of $z^A_\pi$ gives a sign change any time we exchange two 1. Composing transpositions, we get the corresponding sign change for the resulted permutation, in our case $\zs^{-1}$: if we consider the subsequence of all 1 in $w_A\in\zone$, then the corresponding sign change of the coordinate $z^A_\pi$, denoted $\sgn(w_A,\zs)$, is the number of transpositions of adjacent $1$ in the process of composing $\zs$ out of adjacent transpositions transferring $w_A$ to ${w}_A^\zs$. In other words,
$$\bl\az\br_{\zs}\circ\zF^{\zs}_E=\sgn(w_A,\zs)\cdot\bl z^A_{\zs}\br_\pi,
$$
where 
\be\label{zsA}\sgn(\za,\zs)=\prod_{\substack{\za(i)=\za(j)=1\\ i<j}}\sgn\Bl\frac{\zs(j)-\zs(i)}{j-i}\Br\, =  \prod_{\substack{i<j, \\ \sigma(j)>\sigma(i)}} (-1)^{\za(i) \za(j)}
\ee
denotes the { Koszul } sign. In particular,
$$\bl\az\br_\zs\circ\zF^{\zs}_E=\sgn(\zs)\cdot\bl z^A_\zs\br_\pi$$
if $w^\zs_A=w_A$. It is well known that
\be\label{sign}\sgn(\za,\zs'\zs)=\sgn(\za^{\zs'},\zs)\cdot\sgn(\za,\zs'),\ee
where
\be \label{e:Sn-action}
\za\mapsto \za^{\zs}= \za\circ \zs = \bl\za^{\zs(1)},\dots,\za^{\zs(n)}\br
\ee
denotes  the right action of the symmetric group $\Sn$ on $\zone$.
Note that
\begin{align*}
&\bl z^A_\pi\br_{\zs'\zs}\circ P^\zs\bl\zF^{\zs'}_{E}\br\circ\zF^\zs_{E^{\zs'}}=\Bl\bl z^A_{\zs'}\br_{\zs}\Br_\pi\circ P^\zs\bl\zF^{\zs'}_{E}\br\circ\zF^\zs_{E^{\zs'}}=\sgn(w_A,\zs')\Bl\bl z^A_{\zs'}\br_\pi\Br_{\zs}\circ\zF^\zs_{E^{\zs'}}\\
&=\sgn\bl w_A^{\zs'},\zs\br\sgn\bl w_A,\zs'\br\Bl\bl z^A_{\zs'}\br_\zs\Br_\pi
=\sgn\bl w_A,\zs'\zs\br\bl z^A_{\zs'\zs}\br_\pi\,,
\end{align*}
so
\be\label{zfcomp}\zF^{\zs'\zs}_E= P^{\zs}\bl\zF^{\zs'}_{E}\br\circ\zF^{\zs}_{E^{\zs'}}.
\ee
In the short-hand notation, $\zF^{\zs'\zs}_E=\zF^{\zs'}_{E}\circ\zF^{\zs}_{E^{\zs'}}$.
On the diagram,
\be\label{ZFco}\zP(E^{\zs'\zs})=\zP\bl(E^{\zs'})^\zs\br\overset{\zF^{\zs}_{\! E^{\zs'}}}
{\xrightarrow{\hspace{1cm}}}\bl\zP E^{\zs'}\br^\zs\overset{P^{\zs}(\zF^{\zs'}_{E})}{\xrightarrow{\hspace{1.5cm}}}
\bl( \zP E)^{\zs'})^{\zs}=\bl\zP E\br^{\zs'\zs}.
\ee
Moreover, it is easy to see that the transformation $\zF^\zs_E$ commutes with the induced morphisms, or is `natural' in the categorical sense (an isomorphism of functors): if $\zf:E_1\to E_2$ is a morphism of $n$-vector bundles, then the diagram
\be\label{zs}
\xymatrix@C+45pt@R+25pt{
\zP(E^\zs_1)\ar[r]^{\zP(P^\zs\zf)}\ar[d]^{\zF^{\zs}_{E_1}} & \zP(E^\zs_2)\ar[d]^{\zF^{\zs}_{E_2}} \\
(\zP E_1)^\zs\ar[r]^{P^\zs(\zP\zf)}& (\zP E_2)^\zs}
\ee

\mn is commutative. Hence, we can view $\zF^\zs$ as an isomorphisms of functors
\be\label{zP2}\zF^\zs:\zP\circ P^\zs\to P^\zs\circ\zP,\ee
{ given by the family of $n$-vector bundle isomorphisms $\zF^\zs_E$, cf. \eqref{zF2}.}
In this way, we get the following generalization of \cite[Proposition 1]{Voronov:2012}.
\begin{proposition}\label{p1}
For every $\zs\in\Sn$ there is a canonical isomorphism of functors (\ref{zP2})
commuting with the induced morphisms, $\zF^{\zs}\circ \zP(P^{\zs}\zf)=P^\zs(\zP\zf)\circ\zF^{\zs}$.
In the short-hand notation (see Remark \ref{zs0}),
\be\label{Fzspi}\zF^{\zs}\circ \zP(\zf)=\zP(\zf)\circ\zF^{\zs}.
\ee
In multi-affine coordinates, the isomorphism $\zF^\zs_E:\zP(E^\zs)\to(\zP E)^\zs$ takes the form
$$ {u}^s_\zs\circ\zF^{\zs}_E={u}^s_\zs,\quad
\bl\az\br_\zs\circ\zF^{\zs}_E=\sgn(w_A,\zs)\cdot\bl z^A_\zs\br_\pi.
$$
\end{proposition}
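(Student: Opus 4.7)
\mn\textbf{Proof proposal.} The plan is to establish the statement first for an adjacent transposition $\zs_0=(i,i+1)$ and then bootstrap to general $\zs\in\Sn$ by composition, invoking the cocycle identity (\ref{sign}) for the Koszul sign to guarantee that the outcome is independent of the chosen factorization. Throughout, I would work locally in multi-affine coordinates, where every functor in sight is determined by its effect on homogeneous fiber coordinates. For the adjacent case, formula (\ref{zF}) already defines $\zF^{\zs_0}_E$, and I would verify that it is an $n$-vector bundle isomorphism by checking that it intertwines the transition maps on a typical monomial $z^{A_1}z^{A_2}\cdots z^{A_k}T(u)$ appearing in (\ref{trans}). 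The calculation illustrated by Example~\ref{aba} shows that the discrepancy between $\zP^i\circ\zP^{i+1}$ and $\zP^{i+1}\circ\zP^i$ comes entirely from transposing factors whose weights are both nontrivial in positions $i$ and $i+1$, accumulating exactly the sign $(-1)^{w_A(i)w_A(i+1)}$; the remaining coordinates pass through unchanged.

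\mn For a general $\zs\in\Sn$, I would choose a factorization $\zs=\zt_r\cdots\zt_1$ into adjacent transpositions and iterate (\ref{zfcomp}) to define $\zF^\zs_E$ as a composition of the already-constructed pieces. The identity (\ref{sign}) is precisely the 2-cocycle property of the Koszul sign for the right action of $\Sn$ on $\zone$, so the short inductive computation appearing just before the proposition exhibits the total coordinate action as the diagonal scaling by $\sgn(w_A,\zs)$, independent of the factorization. The resulting $\zF^\zs_E$ is then unambiguously determined by the stated formula and is an $n$-vector bundle isomorphism because it fixes base coordinates and multiplies each $\bl\az\br_\zs$ by $\pm 1$; this automatically preserves all Euler vector fields $\n^k$ and all transition maps (\ref{trans}), which are polynomial in the homogeneous fiber coordinates with base-function coefficients.

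\mn For naturality, a morphism $\zf:E_1\to E_2$ of $n$-vector bundles is itself locally polynomial in fiber coordinates with weight-graded coefficients of the form (\ref{transitions}). Applying either side of (\ref{zs}) to a single monomial summand produces the same Koszul-sign correction, this time between $E_1$ and $E_2$ rather than in a self-transition, by the same argument used to establish compatibility with (\ref{trans}); the short-hand identity (\ref{Fzspi}) follows. The main obstacle in the proof is the sign bookkeeping in the bootstrapping step: one must confirm that signs produced by stacking adjacent transpositions multiply to exactly $\sgn(w_A,\zs)$ rather than some spurious invariant. Once Example~\ref{aba} is digested and the cocycle identity (\ref{sign}) is in hand, the rest reduces to the observation that every operation in play is linear over homogeneous summands, so it suffices to verify the formulas monomial by monomial.
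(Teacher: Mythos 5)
Your proposal is correct and follows essentially the same route as the paper, whose justification is woven into the text preceding the proposition: define $\zF^{\zs_0}$ for an adjacent transposition via the monomial computation of Example~\ref{aba}, extend to general $\zs$ by composing transpositions with the Koszul-sign cocycle identity (\ref{sign}) guaranteeing independence of the factorization, and observe naturality by the same local monomial argument. The only loose spot is the remark that a diagonal $\pm 1$ scaling ``automatically'' preserves the transition maps --- that compatibility is exactly what the monomial sign computation establishes (and, for general $\zs$, what follows from being a composite of verified adjacent-transposition isomorphisms) --- but you do carry out that verification earlier, so nothing is missing.
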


\section{Iterated tangent bundles}
The canonical association of the tangent bundle with a supermanifold gives rise to the \emph{tangent functor} $\sT$ from the category of supermanifolds into the category of vector superbundles. Another important feature of the tangent bundle is that we can lift tensor fields from $M$ to $\sT M$. This works exactly like in the purely even case \cite{Grabowski:1995,Yano:1973}.

Let $Y$ be a vector field on a supermanifold $M$. The \emph{tangent lift} $\dt Y$ is a vector field on $\sT M$ uniquely determined by the identity (cf. \cite{Grabowska:2024})
\be\label{tl1} \dt Y(\zi_\zm)=\zi_{\Ll_Y\zm}\,,\ee
where $\zm$ is an arbitrary 1-form on $M$, $\zi_\zm$ is the linear function on $\sT M$ represented by $\zm$, and $\Ll_Y=\xd\circ i_Y+i_Y\circ\xd$ denotes the Lie derivative along $Y$.
In the adapted local coordinates $(x^a,\dot x^b)$ on $\sT M$, the tangent lift of a vector field $Y=\sum_af^a(x)\pa_{x^a}$ on $M$ is given by
\be\label{lift1} \dt Y=\sum_a\left(f^a(x)\pa_{x^a}+\left(\sum_b\dot x^b\frac{\pa f^a}{\pa x^b}(x)\right)\,\pa_{\,\dot x^a}\right)\,.\ee
An important fact is that the tangent lift of vector fields respects the Lie bracket \cite{Grabowski:1995}.

We will mainly use the tangent (complete) lifts $\dt\n_E$ of Euler vector fields $\n_E$ which are characterized very simply: as Euler vector fields are even and complete, the tangent lift $\dt\n_E$ is the generator of the flow $\sT\zf_{\n_E}^s$ of diffeomorphisms of $\sT E$, where $\zf^s_{\n_E}$ is the flow of the Euler vector field $\n_E$ on a vector { bundle } $E$. In the adapted coordinates for affine coordinates $(u^s,z^A)$ in $E$, the tangent lift of the Euler vector field $\n_E=z^A\pa_{z^A}$ reads
\be\label{tl}\dt\n_E=z^A\pa_{z^A}+\dot z^A\pa_{\dot z^A}.
\ee
Since $\sT\zf_{\n_E}^s$ is a VB-morphism of $\sT E\to E$, the tangent lift is a linear vector field on $\sT E\to E$, i.e., it commutes with the Euler vector field of the vector { bundle } $\sT E\to E$,
\be\label{tlc}[\dt\n_E,\n_{\sT E}]=0.\ee
It is now clear that the tangent bundle $\sT E$ of a vector bundle $\zt:E\to M$ is canonically a double vector bundle with the diagram
\be\label{dvb}
\xymatrix@C+35pt@R+15pt{
\sT E \ar[r]^{\zt_E}\ar[d]_{\sT\zt} & E\ar[d]^{\zt} \\
\sT M \ar[r]^{\zt_M} & M\,.}
\ee
The commuting Euler vector fields are
$$\n_{\sT E}=\dot x^a\pa_{\dot x^a}+\dot y^i\pa_{\dot y^i}\quad \text{and}\quad \dt\n_E= y^i\pa_{ y^i}+\dot y^i\pa_{\dot y^i}.
$$
A particular example is the iterated tangent bundle $\sT\sT M$ whose diagram reads
\be\label{dvbt}
\xymatrix@C+35pt@R+15pt{
\sT\sT M \ar[r]^{\zt_{\sT M}}\ar[d]_{\sT\zt_M} & \sT M\ar[d]^{\zt_M} \\
\sT M \ar[r]^{\zt_M} & M\,.}
\ee
The corresponding Euler vector fields we will denote $\n^1_M=\n_{\sT(\sT M)}$ and $\n^2_M=\dt\n_{\sT M}$. But this DVB, canonically associated with the manifold $M$, has an additional important property: both VB structures are canonically isomorphic.
\begin{proposition}
Let $M$ be a supermanifold. Then, there is a canonical diffeomorphism $\zk_M:\sT\sT M\to\sT\sT M$ such that $\zk_M^2=\id$ and $\zk_M$ intertwines the two vector bundle structures on $\sT\sT M$,
$$(\zk_M)_*(\n_M^1)=\n^2_{M}.$$
In adapted coordinates,
\be\label{zk}
\bl x^a,\dot x^b,\zd x^c,\zd\dot x^d\br\circ \zk_M=\bl x^a,\zd x^b,\dot x^c,\zd\dot x^d\br.
\ee
In particular, $\zk$ is the identity on the \emph{core} vector bundle $C\to M$ given by
$$C=\big\{\bl x^a,\dot x^b,\zd x^c,\zd\dot x^d\br\,|\, \zd x^b=\dot x^c=0\big\}\subset\sT\sT M.$$
\end{proposition}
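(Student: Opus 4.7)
The plan is to verify that the coordinate formula (\ref{zk}) defines a well-defined global diffeomorphism; the remaining three claims then follow by direct inspection of this formula.

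First I would compute the transition law on $\sT\sT M$ induced by a coordinate change $y^a=y^a(x)$ on $M$. Applying the tangent functor twice (or, equivalently, substituting $x^a+\ze\,\dot x^a+\zh\,\zd x^a+\ze\zh\,\zd\dot x^a$ with independent even square-zero parameters $\ze,\zh$ into $y^a(\cdot)$ and Taylor-expanding in $\ze,\zh$) yields
\begin{align*}
\dot y^a &= \dot x^b\,\pa_b y^a(x),\\
\zd y^a &= \zd x^b\,\pa_b y^a(x),\\
\zd\dot y^a &= \zd\dot x^b\,\pa_b y^a(x)+\dot x^b\,\zd x^c\,\pa_c\pa_b y^a(x).
\end{align*}
The crucial step is to check that these transition functions are invariant under the swap $\dot x\leftrightarrow \zd x$ (with $x$ and $\zd\dot x$ fixed). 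The first three equations are manifestly so; for the last, after the swap the mixed term becomes $\zd x^b\,\dot x^c\,\pa_c\pa_b y^a$. Using the super-commutativity of coordinates, $\zd x^b\,\dot x^c=(-1)^{|b||c|}\dot x^c\,\zd x^b$, the graded symmetry of mixed partials, $\pa_c\pa_b y^a=(-1)^{|b||c|}\pa_b\pa_c y^a$, and a relabeling $b\leftrightarrow c$, this expression equals $\dot x^b\,\zd x^c\,\pa_c\pa_b y^a$. Hence the local prescription (\ref{zk}) is compatible with the transition functions on $\sT\sT M$ and defines a global diffeomorphism $\zk_M:\sT\sT M\to\sT\sT M$.

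The remaining assertions are read off directly from (\ref{zk}). Applying the swap twice restores the original coordinates, so $\zk_M^2=\id$. The Euler vector fields are
$$\n_M^1=\n_{\sT(\sT M)}=\zd x^a\pa_{\zd x^a}+\zd\dot x^a\pa_{\zd\dot x^a},\qquad \n_M^2=\dt\n_{\sT M}=\dot x^a\pa_{\dot x^a}+\zd\dot x^a\pa_{\zd\dot x^a},$$
the second being obtained from (\ref{tl}) applied to $\n_{\sT M}=\dot x^a\pa_{\dot x^a}$; these are interchanged by the swap, giving $(\zk_M)_*\n_M^1=\n_M^2$. Finally, on the core $C=\{\dot x=\zd x=0\}$ the only surviving coordinates are $x^a$ and $\zd\dot x^a$, both fixed by $\zk_M$, so $\zk_M|_C=\id_C$.

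The main technical obstacle is the well-definedness step; all signs conspire in the right way because the underlying geometric content is the graded symmetry of the Hessian of $y^a(x)$, which encodes the ``commutativity'' of the two infinitesimal directions in $\sT\sT M$.
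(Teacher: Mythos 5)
Your proof is correct and follows essentially the same route as the paper: compute the transition maps on $\sT\sT M$ induced by a change of coordinates on $M$ and check that the swap $\dot x\leftrightarrow\zd x$ is compatible with them, the only nontrivial point being the mixed term in $\zd\dot y^a$, which is handled exactly as in the paper via super-commutativity of the coordinates together with the graded symmetry of second partial derivatives (super Schwarz theorem). Your write-up is in fact slightly more explicit than the paper's, both in the sign bookkeeping and in spelling out the verification of $\zk_M^2=\id$, the interchange of the Euler vector fields, and the identity on the core.
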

\begin{proof}
The transition maps in the adapted coordinates read
$$y^a=y^a(x),\quad \dot y^a=\dot x^b\frac{\pa y^a}{\pa x^b}(x),\quad \zd y^a=\zd x^b\frac{\pa y^a}{\pa x^b}(x),
\quad \zd\dot y^a=\zd\dot x^b\frac{\pa y^a}{\pa x^b}(x)+\zd x^b\dot x^c\frac{\pa^2 y^a}{\pa x^c\pa x^b}(x),$$
and respect the local form (\ref{zk}) of $\zk$. Note that the partial derivatives $\frac{\pa^2 y^a}{\pa x^c\pa x^b}(x)$ are super-symmetric (Schwarz Theorem for supermanifolds).

\end{proof}
\no The diffeomorphism $\zk$ is called the \emph{canonical flip} of $\sT\sT M$. Fixed points of $\zk$, i.e., the submanifold $\dot x^a-\zd x^a=0$, are called \emph{holonomic vectors} and form the second tangent bundle $\sT^2M$ (second jets of curves) of $M$.  In what follows, we will use the convention in which $x^a$ is denoted with {$x^{a, (1,0)}$, $\zd x^a$ with $x^{a, (0,1)}$, and $\zd\dot x^a$ with $x^{a, (1,1)}$. In this notation, $\zk$ is the transposition $x^{a, (i,j)}\circ\zk=x^{a, (j, i)}$.
}

\mn It is clear that, inductively, we obtain an $n$-vector bundle structure on the $n$-iterated tangent bundle $\sT^{(n)}M=\sT\sT\cdots\sT M$. The commuting Euler vector fields are
$$\n_M^k=\dt^{k-1}\n_{\sT(\sT^{(n-k)}M)},\quad k=1,\dots,n,$$
where $\dt^l$ is the iteration of $l$ subsequent tangent lifts, $\dt^0=\id$. In particular, $$\n_M^1=\n_{\sT(\sT^{(n-1)}M)}\quad\text{and}\quad \n_M^{n}=\dt^{n-1}\n_{\sT M}.
$$
Note that the subsequent flips $\zk_{\sT^{(j)}M}$, $j=0,\dots,n-2$, we can extend canonically to $\sT^{(n)}M$ by putting
$$\zk^j_M=\sT^{(n-j-2)}\bl\zk_{\sT^{(j)}M}\br.$$
They act as automorphisms of the $n$-vector bundle structure.
Moreover, composing these flips, we obtain a left action $\zs\mapsto I^\zs$ of the symmetry group $\Sn$ on the supermanifold $\sT^{(n)}M$, which permutes the vector bundle structures (the Euler vector fields)
$$I^\zs_*(\n_M^i)=\n_M^{\zs(i)}.$$
In other words, $\Sn$ acts on $\sT^{(n)}M$ by diffeomorphisms being isomorphism of $n$-vector bundle structures
\be\label{Izs}I^\zs:\sT^{(n)}M\to \bl{\sT}^{(n)}M\br^\zs
\ee
and
$$I^{\zs'\zs}=I^{\zs'}\circ I^{\zs}.$$
on the level of diffeomorphisms. On the level of morphisms of $n$-vector bundles, we could write more precisely
\be\label{Izs1}P^\zs(I^{\zs'})\circ I^{\zs}=I^{\zs'\zs},
\ee
We have the adapted homogeneous coordinates {$\bl x^{a, (\za)}\br$, } where $\za\in\zone$ is the {weight } of $x^{a, (\za)}$.
Since any subsequent flip is the transposition of elements of $\za$, for any $\zs\in\Sn$ we have (cf. (\ref{wzs1}))
\be\label{zs3}x^{a, (\za)} \circ I^\zs=x^{a, (\za^\zs)},
\ee
Of course, for the transposition $\zs_0=(i,j)$, the diffeomorphism $I^{\zs_0}$ acts as the identity on the $(i,j)$-core $C^{ij}_{\sT^{(n)}M}$. In this way, we get a canonical model of a \emph{symmetric $n$-vector bundle} \cite{Bruce:2016a}.

\begin{definition}
A \emph{symmetric $n$-vector bundle} is an $n$-vector bundle $\bl E,\n^1,\dots,\n^n\br$ equipped with a left action $I$ of the symmetric group $\Sn$ by diffeomorphisms $I^\zs$ of $E$, $\zs\in\Sn$, permuting accordingly the Euler vector fields, $I^\zs_*(\n^i)=\n^{\zs(i)}$, and such that for any transposition $\zs=(i,j)$ the diffeomorphism $I^\zs$ acts as the identity on the $(i,j)$-core $C^{ij}_E$.

\mn A \emph{morphism} in the category $\svbn$ of symmetric $n$-vector bundles is a morphism of $n$-vector bundles intertwining the action of the symmetry group $\Sn$. For  $[n]$-vector bundles, the corresponding subcategory is denoted by $\svbne$.
\end{definition}
\begin{lemma}\label{l:adapted_coord}
Any symmetric { $n$-vector } bundle  $E = (E; \n^1, \ldots, \n^n)$ admits a system of graded coordinates $(z_\alpha^A)$ such that
for any $\sigma\in\Sn$ we have
\begin{equation}\label{e:coord_sym}
    z^A_\alpha \circ I^{\zs} = z^A_{\za^\zs},
\end{equation}
 where $\za^\zs$ is as in \eqref{e:Sn-action}, the index $A$ ranges over a set $\Omega$ given as a disjoint union $\Omega = \bigsqcup_{j=0}^n \Omega_j$, and $\alpha \in \{0,1\}^n$ encodes the weight of the coordinate $z_\alpha^A$. The total weight $|\alpha|$ of $z_\alpha^A$ is equal to $j$ whenever $A \in \Omega_j$.
It follows that the number of coordinates $z_\alpha^A$ of weight $\alpha$ is equal to $\sharp \Omega_j$, where $j=|\alpha|$, and this number is the same for all $\alpha$ with the same total weight.
\end{lemma}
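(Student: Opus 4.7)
The plan is to construct the adapted coordinates inductively on the total weight $j=|\alpha|$, working locally over a coordinate chart of $M$. For $j=0$ I would first observe that $M\subset C^{ik}_E$ for every $i\neq k$, so the transposition $I^{(ik)}$ fixes $M$ pointwise; since transpositions generate $\Sn$, one has $I^\sigma|_M=\id$ for every $\sigma\in\Sn$, and any base coordinates $u^s$ are automatically $\Sn$-invariant. Let $\Omega_0$ be their index set.

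For the inductive step I would fix the reference weight $\alpha_0=(1,\ldots,1,0,\ldots,0)$ with $j$ ones, whose stabilizer $H_{\alpha_0}=\mathbb{S}_j\times\mathbb{S}_{n-j}$ is generated by the transpositions $(ik)$ satisfying $\alpha_0^i=\alpha_0^k$. The key geometric observation is that the sub-vector-bundle $E[\alpha_0]\subset E$, defined locally by the vanishing of all coordinates of weight $\notin\{0,\alpha_0\}$, is in fact globally well-defined: for binary weights the only sum $r\alpha_0$ with $r\ge 1$ lying in $\zone$ is $\alpha_0$ itself, so the transition formulas \eqref{trans} preserve this locus. For each generator $(ik)$ of $H_{\alpha_0}$ one has $E[\alpha_0]\subset C^{ik}_E$, hence $I^{(ik)}|_{E[\alpha_0]}=\id$, and therefore $H_{\alpha_0}$ acts trivially on the vector bundle $E[\alpha_0]\to M$.

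I would then use the canonical short exact sequence of $\Ci(M)$-modules
\[
0\longrightarrow V_{\alpha_0}^{\mathrm{nl}}\longrightarrow V_{\alpha_0}\xrightarrow{\;\mathrm{res}\;}\Gamma(M,E[\alpha_0]^*)\longrightarrow 0,
\]
where $V_{\alpha_0}$ denotes weight-$\alpha_0$ functions on $E$ and $V_{\alpha_0}^{\mathrm{nl}}$ is the kernel of restriction to $E[\alpha_0]$. A direct count in multi-affine coordinates shows that $V_{\alpha_0}^{\mathrm{nl}}$ is spanned by products of at least two coordinates of total weight strictly less than $j$, which by the inductive hypothesis are already adapted. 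The triviality of $H_{\alpha_0}$ on $E[\alpha_0]$ transfers to the quotient $\Gamma(M,E[\alpha_0]^*)$. Picking any weight-$\alpha_0$ coordinates $y^A_{\alpha_0}$ indexed by a $\Ci(M)$-basis $\Omega_j$ of $\Gamma(M,E[\alpha_0]^*)$, I would then set
\[
z^A_{\alpha_0}\;:=\;\tfrac{1}{|H_{\alpha_0}|}\sum_{h\in H_{\alpha_0}} y^A_{\alpha_0}\circ I^h.
\]
This is $H_{\alpha_0}$-invariant by construction and differs from $y^A_{\alpha_0}$ only by an element of $V_{\alpha_0}^{\mathrm{nl}}$, i.e.\ by a polynomial in already-adapted coordinates of strictly lower weight; hence $\{z^A_{\alpha_0}\}_{A\in\Omega_j}$ remains a valid system of weight-$\alpha_0$ coordinates.

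Finally, for every $\alpha$ in the $\Sn$-orbit of $\alpha_0$, which is precisely $\{\beta\in\zone:|\beta|=j\}$, I would pick any $\sigma\in\Sn$ with $\alpha=\alpha_0^\sigma$ and define $z^A_\alpha:=z^A_{\alpha_0}\circ I^\sigma$. Independence from the choice of $\sigma$ follows from the $H_{\alpha_0}$-invariance of $z^A_{\alpha_0}$, as two valid choices of $\sigma$ differ on the left by an element of $H_{\alpha_0}$. The equivariance \eqref{e:coord_sym} reduces to the direct computation $z^A_\alpha\circ I^\tau=z^A_{\alpha_0}\circ I^{\sigma\tau}=z^A_{\alpha_0^{\sigma\tau}}=z^A_{\alpha^\tau}$, while the fact that $I^\sigma|_{E[\alpha_0]}:E[\alpha_0]\to E[\alpha]$ is a vector bundle isomorphism over $M$ guarantees that $\{z^A_\alpha\}_{A\in\Omega_j}$ is a basis of weight-$\alpha$ coordinates, with cardinality $|\Omega_j|$ independent of $\alpha$. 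The main obstacle is verifying that averaging does not spoil the coordinate status; this is precisely what the canonical filtration by $V_{\alpha_0}^{\mathrm{nl}}$ achieves, and its existence rests on the fact that $E[\alpha_0]$ is canonically defined in the binary-weight setting.
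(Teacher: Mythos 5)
Your proposal is correct and follows essentially the same route as the paper's proof: average the chosen weight-$\beta$ coordinates over the stabilizer $\Stab(\beta)\simeq\Sgroup_j\times\Sgroup_{n-j}$, use the core condition to see that the stabilizer acts trivially on $E[\beta]$ so the average agrees with the original coordinates there (hence remains a valid frame), and then transport along the $\Sn$-orbit via a single $I^\sigma$, with well-definedness and equivariance following from stabilizer-invariance. Your exact-sequence formalization of "differs only by products of lower-weight coordinates" and the induction on total weight are just more explicit packaging of the same argument (the induction is not actually needed, since each weight level is handled independently).
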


\begin{proof}
We will refer to a coordinate system $(z^A)$  on a symmetric $n$-vector bundle satisfying \eqref{e:coord_sym} as a \emph{nice} coordinate system.

It is instructive to first consider the case $n = 2$. Assume that the symmetric structure on a double vector bundle $(E; \n^1, \n^2)$ (in the category of supermanifolds) is given by an involution $I$, and let $C$ denote the core of $E$.
Choose graded coordinates $(x^a; y^i, Y^j; z^\mu)$ of weights $00$, $01$, $10$, and $11$, respectively, and such that $Y^j = I^*(y^j)$, so that $I^*(Y^j) = y^j$, since $I$ is an involution. By assumption, $I|_C = \id_C$, hence
\[
I^*(z^\mu) = z^\mu + y^i Y^j I^\mu_{ji}(x)
\]
for some functions $I^\mu_{ji}$ defined locally on a supermanifold $M$, the total base of $E$. From $I^*(I^*(z^\mu)) = z^\mu$, we get
\[
I^\mu_{ji} + (-1)^{\tilde{i}\tilde{j}} I^\mu_{ij} = 0,
\]
i.e., the functions $\left(I^\mu_{ij}\right)$ are (super) skew-symmetric in the lower indices.
We aim to show that every symmetric double vector bundle admits a nice coordinate system, meaning in our case that all the functions $I^\mu_{ij}$ vanish. We seek functions $Q^\mu_{ji}(x)$ such that
\[
(x^a; y^i, Y^j; \und{z}^\mu = z^\mu + y^i Y^j Q^\mu_{ji}(x))
\]
yields a nice system. We have
$$
I^*(\und{z}^\mu) = z^\mu + y^i Y^j \left(I^\mu_{ji}(x) + (-1)^{\tilde{i}\tilde{j}} Q^\mu_{ij}(x)\right),
$$
so $I^*(\und{z}^\mu) = \und{z}^\mu$  is satisfied when we choose
\[
Q^\mu_{ij}(x) := \frac{1}{2} I^\mu_{ij}(x).
\]
Note that $\und{z}^\mu_{ij} = \frac12 \bl z^\mu_{ij} + I^*(z^\mu_{ij}) \br$.

Now consider the general case $n \geq 2$.
 Let $(y^A)$ be any system of $\{0, 1\}^n$-graded coordinates on $E$ defined over an open sub-supermanifold $U \subset M$, where $M$ is the total base  of $E$. Without loss of generality, we may assume that $M=U$.

Fix $1\leq m\leq n$. Consider weights $\alpha\in\{0, 1\}^n$ such that $|\alpha| = m$, and let $\zb = (\underbrace{1, \ldots, 1}_{m}, 0, \ldots, 0)$ denote one such weight.
 Let $\Omega_m$ be the set of indices $A$ of the coordinates $(y^A)$ of weight $\zb$.
Denote
$$
\Stab(\alpha) = \{\sigma\in \Sn : \alpha^\sigma = \alpha\},
$$
the stabilizer of a weight $\alpha$.  Note that $\Stab(\alpha)$ is a subgroup of $\Sn$ isomorphic with $\Sgroup_m \times \Sgroup_{n-m}$, so $ \# \Stab(\alpha) =  m!(n-m)!$.
We correct the coordinates $(y^A)$ of weight $\zb$ by setting
\be \label{df:zA0}
    z^A_{\zb} := \frac{1}{\sharp  \Stab(\zb)} \sum_{\sigma\in \Stab(\zb)} y^A \circ I^\sigma, \quad  A\in \Omega_m.
\ee
Then   for any $\sigma\in \Stab(\zb)$  and $A\in \Omega_m$, we have $z^A_{\zb} \circ I^\sigma = z^A_{\zb}$ as $\Stab(\zb)$ is a group.

  Recall that there is a non-canonical isomorphism of $n$-vector bundles, $E \simeq \prod_{M} E[\alpha]$, the fiber product over $M$ of the vector bundles $E[\alpha]$, indexed by $\alpha \in \{0, 1\}^n\setminus 0^n$, where $E[\alpha] \subset E$ are the vector bundles defined in \eqref{df:E[alpha]}.  For each $\sigma\in \Stab(\alpha)$ we have
\begin{equation}\label{e:gVeps}
 I^\sigma|_{E[\alpha]} = \id_{E[\alpha]}.
\end{equation}
 Indeed, any $\sigma\in \Stab(\alpha) \simeq \mathbb{S}_m\times \mathbb{S}_{n-m}$ is a product of transpositions from $\Stab(\alpha)$, and for each such transposition $\sigma =(i, j)$, the equality \eqref{e:gVeps} holds due to assumption that $I^\sigma$  acts as the identity on the core bundle of $(E; \n^i, \n^j)$.

 Therefore, from  \eqref{e:gVeps},
 $$
    z^A_{\zb}|_{E[\zb]} = y^A|_{E[\zb]},
 $$
for any $A\in \Omega_m$ and $z^A_{\zb}$ is homogenous of weight $\zb$. Since $(y^A)_{A\in \Omega_m}$ is a frame of local coordinates of weight $\zb$, the same is true for the family $(z^A_{\zb})_{A\in \Omega_m}$.

Now take any $\alpha$ with $|\alpha| = m$ and consider the set $G_\alpha:= \{\sigma\in \Sn : \zb^\sigma=\alpha\}$. It coincides with $\Stab(\zb)\cdot \sigma$ where $\sigma$ is any element from the non-empty set $G_\alpha$. Define
\begin{equation} \label{d:z^a_eps}
    z_\alpha^A = \frac{1}{m! (n-m)!} \sum_{\sigma\in G_\alpha} z_{\zb}^A \circ I^\sigma,
\end{equation}
where $A\in \Omega_m$, and $z_{\zb}^A$ are defined by \eqref{df:zA0}. Note that $z_{\zb}^A \circ I^\sigma$ are homogenous of weight $\alpha = \beta^\sigma$, hence $z_\alpha^A$ is the same. Besides, for any $\tau\in \Sn$ we have $z_{\alpha}^A \circ I^\tau = z_{\alpha.\tau}^A$. Indeed,
$$
    z_\alpha^A \circ I^\tau = \frac{1}{m! (n-m)!} \sum_{\sigma\in G_\alpha} \left( z_{\zb}^A \circ I^{\sigma\tau} \right) = \frac{1}{m! (n-m)!} \sum_{\sigma\in G_{\alpha.\tau}} z_{\zb}^A \circ I^\sigma = z^A_{\alpha^\tau},
$$
as $\sigma\in G_{\alpha}$ if and only if $\sigma^\tau\in G_{\alpha^\tau}$.
Hence, $(z^A_{\alpha})_{A\in \Omega_m}$ is a  frame of local coordinates of weight $\alpha$, since $I^\sigma: E[\alpha] \to E[\zb]$ is a vector bundle isomorphism, where $\sigma$ is such that $\zb^\sigma = \alpha$.

 Thus, the union of the functions $\left(z^A_\alpha\right)$ defined in \eqref{d:z^a_eps}, where $\alpha$ varies in $\{0, 1\}^n\setminus 0^n$, and $A\in \Omega_{|\alpha|}$, together with base coordinates $\bl z^A_{0^n} = y^A\br$, where $\w(y^A) = 0^n$, constitutes a coordinate system for the symmetric vector bundle $E$ satisfying the equation \eqref{e:coord_sym}.
\end{proof}

\no We can view $I^\zs$ as an isomorphism $I^\zs:E\to E^\zs$ of $n$-vector bundles with
$$z^A_\zs\circ I^\zs=z^A.$$
It is also natural to write $I^{\zs'}$ also for the isomorphism
$$I^{\zs'}=P^\zs(I^{\zs'}):E^\zs\to \bl E^{\zs'}\br^\zs=E^{\zs'\zs},$$
since it is represented by the same diffeomorphism of $E$. Obviously, we have the following.
\begin{proposition} The $n$-vector bundle $E=\sT^{(n)}M$ associated with a manifold $M$ is a canonical example of a symmetric $n$-vector bundle.
\end{proposition}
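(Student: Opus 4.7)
The strategy is to verify the three conditions in the definition of a symmetric $n$-vector bundle for the pair $(\sT^{(n)}M, I)$, where $\{I^\zs\}_{\zs\in\Sn}$ denotes the family generated by the canonical flips $\zk^j_M$. The first two conditions -- that $\zs\mapsto I^\zs$ is a left $\Sn$-action by $n$-vector bundle isomorphisms, and that $I^\zs_*(\n^k_M) = \n^{\zs(k)}_M$ -- have essentially been established in the discussion leading to (\ref{Izs})--(\ref{Izs1}). I would consolidate them using the coordinate identity (\ref{zs3}), $x^{a,(\za)}\circ I^\zs = x^{a,(\za^\zs)}$: composition in $\Sn$ transfers directly to composition of the $I^\zs$, and a one-line pushforward calculation yields $(I^\zs_*\n^k_M)(x^{a,(\za)}) = \za^\zs(k)\cdot x^{a,(\za)} = \za(\zs(k))\cdot x^{a,(\za)} = \n^{\zs(k)}_M(x^{a,(\za)})$.

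The one item requiring a dedicated argument is the core condition. For a transposition $\zs = (i,j)\in\Sn$ and a weight $\za\in\zone$ with $\za(i) = \za(j)$, we have $\za^\zs = \za$, so by (\ref{zs3}) the coordinate $x^{a,(\za)}$ is fixed by $I^\zs$. Coordinates with $\za(i)\neq\za(j)$ vanish on $C^{ij}_{\sT^{(n)}M}$ by definition, and their images $x^{a,(\za^\zs)}$ also have unequal $i$-th and $j$-th components, so they too vanish there. Hence $I^\zs$ preserves the $(i,j)$-core and acts on it as the identity.

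The main subtlety, and the step where I would be most careful, is the justification of (\ref{zs3}) for an \emph{arbitrary} $\zs\in\Sn$ rather than only for the generating adjacent flips. One has to check that the $\zk^j_M$ satisfy the Coxeter relations of $\Sn$ -- involutivity, commutation at distance $\ge 2$, and the braid relation $(\zk^j_M\zk^{j+1}_M)^3 = \id$ -- so that $\zs\mapsto I^\zs$ descends to a homomorphism from $\Sn$ rather than merely from the free monoid on adjacent transpositions. In adapted coordinates this is immediate: each $\zk^j_M$ swaps the $j$-th and $(j+1)$-th entries of $\za\in\zone$ and fixes everything else, and the required relations on the $\zk^j_M$ mirror the standard relations among adjacent transpositions. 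With (\ref{zs3}) validated in full generality, items (i)--(iii) combine to show that $\bl\sT^{(n)}M, (\n^k_M)_{k=1}^n, I\br\in\svbn$. Functoriality in $M$, inherited from functoriality of the tangent construction and naturality of the canonical flip, justifies the term ``canonical''.
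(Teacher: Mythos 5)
Your proposal is correct and follows essentially the same route as the paper, which states the proposition without a separate proof because all three conditions (the $\Sn$-action by flips, the permutation of the Euler vector fields, and the identity on the $(i,j)$-cores) are already established in the discussion surrounding (\ref{Izs})--(\ref{zs3}). Your extra step of verifying the Coxeter relations for the $\zk^j_M$, so that the composite flips genuinely define a homomorphism from $\Sn$, is a point the paper passes over silently ("composing these flips, we obtain a left action"), and your coordinate argument for it is valid since each flip acts purely by permuting the multi-indices $\za$.
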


\section{Parity reversion for symmetric $n$-vector bundles}
Let us now apply the total parity reversion functor $\zP$ to a symmetric $n$-vector bundle $\bl E,\n^1,\dots,\n^n, I\br$. Since in this case, for any $\zs\in\Sn$ we have an isomorphism $I^\zs:E\to\es$, thus
$$\zP I^\zs:\zP E\to \zP\es,$$
composing $\zP I^\zs$ with the isomorphisms of $n$-vector bundles (cf. (\ref{zP2}) 
$\zF^\zs_E:{\zP E^\zs}\to\bl\zP E\br^\zs$, we get an isomorphism of $n$-vector bundles
\be\label{A}J^\zs_E=\zF^\zs_{E}\circ\zP I^\zs:\zP E\to\bl\zP E\br^\zs.\ee
Note, however, that for a transposition $\zs_0=(i,j)$ the isomorphisms $I^{\zs_0}$, thus $\zP I^{\zs_0}$, acts as the identity on the $(i,j)$-core $C^{ij}_E$. { Due to Lemma~\ref{l:adapted_coord}, there exists homogenous coordinates $(u^s, z^A)$ on $E$ such that }
$$z^A_\zp\circ { \zP} I^\zs=z^A_\zp\quad \text{if}\quad \zs(w_A)=w_A.$$
As $\zF^{\zs_0}_E$ is the multiplication by $-1$ on the core bundle $C^{ij}_E$, the isomorphism $J^{\zs_0}_E$ acts as the multiplication by $-1$ on $C^{ij}_{\zP E}$.
Hence, like for $\zF^\zs_E$, we have
\be\label{Aij} {u}^s_\pi\circ J^{\zs}_E={u}^s_\pi,\quad
\bl\az\br_\zs\circ J^{\zs}_E=\sgn(w_A,\zs)\cdot z^A_\pi\,;
\ee
in particular,
$$\bl\az\br_\zs\circ J^{\zs}_E=\sgn(\zs)\cdot z^A_\pi$$
if $\zs(w_A)=w_A$.
From (\ref{zs}), we get the commutative diagram
$$
\xymatrix@C+35pt@R+25pt{
\zP(E^\zs)\ar[r]^{\zP P^\zs(I^{\zs'})}\ar[d]^{\zF^{\zs}_{E}} & \zP(E^{\zs'\zs})\ar[d]^{\zF^{\zs}_{E^{\zs'}}} \\
(\zP E)^\zs\ar[r]^{P^\zs(\zP I^{\zs'})}& (\zP E^{\zs'})^{\zs}\,,}
$$
and further (cf. (\ref{zfcomp}))
\begin{align*}& J^{\zs'\zs}_E=\zF^{\zs'\zs}_E\circ \zP I^{\zs'\zs}=P^{\zs}\bl\zF^{\zs'}_{E}\br\circ\zF^{\zs}_{E^{\zs'}}\circ \zP P^\zs(I^{\zs'})\circ\zP I^\zs\\
&\ \ =P^{\zs}\bl\zF^{\zs'}_{E}\br\circ  P^\zs(\zP I^{\zs'})\circ\zF^{\zs}_{E^{\zs'}}\circ\zP I^\zs
=P^{\zs}\bl J^{\zs'}_E\br\circ J^\zs_{E^{\zs'}}\,;
\end{align*}
in the short-hand notation,
\be\label{J} J^{\zs'\zs}_E=J^{\zs'}_E\circ J^\zs_{E^{\zs'}}.
\ee
Viewing $J^\zs_E$ as just a diffeomorphisms $J^\zs$ of $\zP E$, we can write simply $J^{\zs'\zs}=J^{\zs'}\circ J^\zs$, i.e., we deal with a left $\Sn$-action on $\zP E$.
If $\zf:E_1\to E_2$ is a morphism of symmetric $n$-vector bundles, then $\zf\circ I^\zs_1=I^\zs_2\circ\zf$, and from (\ref{zs}) we get
\be\label{zs2}
\xymatrix@C+45pt@R+25pt{
\zP E_1\ar[r]^{\zP I^\zs_1}\ar[dr]^{J^\zs_{E_1}}\ar@/^3pc/[rrr]^{\zP\zf}&\zP(E^\zs_1)\ar[r]^{\zP(P^\zs\zf)}\ar[d]^{\zF^{\zs}_{E_1}} & \zP(E^\zs_2)\ar[d]_{\zF^{\zs}_{E_2}} &\zP E_2\ar[l]_{\zP I^\zs_2}\ar[dl]_{J^\zs_{E_2}}\\
& (\zP E_1)^\zs\ar[r]^{P^\zs(\zP\zf)}& (\zP E_2)^\zs\,,}
\ee

\mn so on the level of diffeomorphisms
$$\zP\zf\circ J^\zs_{E_1}=J^\zs_{E_2}\circ\zP\zf.$$
This object looks like a symmetric $n$-vector bundle, but for this $\Sn$-action any transposition $\zs=(i,j)$ acts on the $(i,j)$-core $C^{ij}_{\zP E}$ as the multiplication by $-1$. In this way, the concept of a \emph{skew-symmetric $n$-vector bundle} is born (cf. \cite{Bruce:2016a}).
\begin{definition}
A \emph{skew-symmetric $n$-vector bundle} is an $n$-vector bundle $\bl E,\n^1,\dots,\n^n\br$ equipped with a left action $J$ of the symmetry group $\Sn$ by diffeomorphisms $J^\zs$ of $E$, $\zs\in\Sn$, permuting accordingly the Euler vector fields, $J^\zs_*(\n^i)=\n^{\zs(i)}$, and such that for any transposition $\zs=(i,j)$ the diffeomorphism { $J^\zs$} acts as the multiplication by -1 on the $(i,j)$-core vector bundle $C^{ij}_E$.

\no A \emph{morphism} in the category $\ssvbn$ of symmetric $n$-vector bundles is a morphism of $n$-vector bundles intertwining the action of the symmetry group $\Sn$.
\end{definition}
\no We know already that $\zP E$ is purely even for any $[n]$-vector bundle $E$. The corresponding subcategory in $\ssvbn$ we will denote $\ssln$. Its objects are purely even skew-symmetric $n$-vector bundles.

\mn{
From \eqref{Aij} and \eqref{J} it follows that if $\bl E,\n^1,\dots,\n^n, I\br$ is a symmetric $n$-vector bundle, then $\bl \zP E,\zP\n^1,\dots,\zP \n^n, \zF\circ \zP I\br$ is a skew-symmetric $n$-vector bundle, and vice versa. (A vector field $X$ on a vector bundle $E \to M$ of weight~$0$ has its superization $\zP X$ on $\zP E \to M$—this rule can be iteratively applied to the total parity reversion functor and weight vector fields.) Therefore, a skew-symmetric $n$-vector bundle also admits homogeneous coordinates $(u^s, z^A)$ such that $J^\zs$ permutes the homogeneous coordinates, and if a coordinate $z^A$ is of weight $w_A$, then the coordinate $z^A \circ J^{\zs}$ is of weight $\zs(w_A)=\bl w_A^{\zs(1)},\dots,w_A^{\zs(n)}\br$, and such that
\[
z^A \circ J^\zs = \sgn(\zs)\cdot z^A \quad \text{if} \quad \zs(w_A)=w_A.
\]
}
What immediately follows from our considerations is the following.
\begin{theorem}\label{main}
There exists a canonical equivalence functor $\Xi$ between the category $\svbn$ of symmetric $n$-vector bundles and the category $\ssvbn$ of skew-symmetric $n$-vector bundles, given explicitly by the total parity reversion,
$$\Xi(E,I)=(\zP E,J),\quad \Xi(\zf)=\zP\zf,$$
where $J^\zs=\zF^\zs\circ\zP I^\zs$ satisfies \eqref{Aij} and \eqref{J}. This equivalence restricts to an equivalence functor on the level of the category $\svbne$ of symmetric $[n]$-vector bundles on one hand, and the category $\ssln$ of skew-symmetric purely even $n$-vector bundles on the other.
\end{theorem}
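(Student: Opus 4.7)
The plan is to assemble the equivalence from the pieces already established in Sections~4 and~6. Since the total parity reversion $\zP$ is already known to be an equivalence of $\vbn$ with itself, and since Proposition~\ref{p1} provides the canonical natural isomorphism $\zF^\zs:\zP\circ P^\zs\to P^\zs\circ\zP$, the only real content is to verify that the enhanced structure $J^\zs=\zF^\zs\circ\zP I^\zs$ genuinely defines a skew-symmetric $\Sn$-action and that the construction is invertible.

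First I would check that $\Xi$ is well-defined on objects. Given $(E,I)\in\svbn$, each $J^\zs_E:\zP E\to(\zP E)^\zs$ is an isomorphism of $n$-vector bundles as a composition of two such isomorphisms, and it permutes the Euler vector fields correctly because both $\zP I^\zs$ and $\zF^\zs_E$ do. The cocycle condition $J^{\zs'\zs}_E=P^\zs(J^{\zs'}_E)\circ J^\zs_{E^{\zs'}}$ is exactly the identity~\eqref{J}, which has already been derived from~\eqref{zfcomp} and the left $\Sn$-action property of $I$. Finally, for a transposition $\zs_0=(i,j)$, $I^{\zs_0}$ restricts to the identity on the $(i,j)$-core of $E$, and $\zF^{\zs_0}_E$ acts as $-1$ on the $(i,j)$-core of $\zP E$ by~\eqref{Aij}, so $J^{\zs_0}_E$ acts as $-1$ on $C^{ij}_{\zP E}$, as required.

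Next, functoriality on morphisms is handled by diagram~\eqref{zs2}: if $\zf:E_1\to E_2$ intertwines $I_1$ and $I_2$, then $\zP\zf$ intertwines $J_{E_1}$ and $J_{E_2}$. To produce a quasi-inverse, I would set, for $(E',J)\in\ssvbn$, $E:=\zP^{-1}E'$ and define $I^\zs:=\zP^{-1}\bl(\zF^\zs_E)^{-1}\circ J^\zs\br$. Since $\zF^\zs_E$ is an isomorphism and $\zP$ is an equivalence, this inverts the recipe defining $J$ bijectively. The verification that $I$ satisfies the symmetric condition on cores is a direct reversal: on $C^{ij}_E$ the factor $(\zF^{\zs_0}_E)^{-1}$ contributes $-1$ and $J^{\zs_0}$ contributes $-1$, so their composite is the identity, as demanded. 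The $\Sn$-action property for $I$ follows from the cocycle~\eqref{zfcomp} for $\zF^\zs$ combined with the cocycle for $J$. Naturality and the fact that both compositions $\Xi^{-1}\Xi$ and $\Xi\Xi^{-1}$ are naturally isomorphic (in fact equal) to the identity follow because $\zP^{-1}\zP=\id$ on objects of $\vbn$ and the correspondence $I\leftrightarrow J$ is a tautological bijection at each $\zs$.

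The restriction statement is essentially automatic: $\zP$ sends $[n]$-vector bundles to purely even $n$-vector bundles by the parity rule~\eqref{parity}, and its inverse $\zP^{-1}$ sends purely even $n$-vector bundles back into $\vbne$. Hence $\Xi$ restricts to an equivalence $\svbne\simeq\ssln$. I anticipate that the main technical care will be needed in bookkeeping the difference between the diffeomorphism-level cocycle $J^{\zs'\zs}=J^{\zs'}\circ J^\zs$ and its refined form $P^\zs(J^{\zs'}_E)\circ J^\zs_{E^{\zs'}}$ at the level of morphisms of $n$-vector bundles, mirroring the distinction in Remark~\ref{zs0}; but once the convention is fixed, everything follows from~\eqref{J}, \eqref{Aij}, and~\eqref{zs2} without further computation.
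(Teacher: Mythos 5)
Your proposal is correct and follows essentially the same route as the paper, which simply declares the theorem to be an immediate consequence of the Section~6 considerations you reassemble: the construction $J^\zs=\zF^\zs_E\circ\zP I^\zs$, the core computation combining the identity action of $I^{(i,j)}$ with the sign $-1$ of $\zF^{(i,j)}_E$, the cocycle \eqref{J}, naturality via \eqref{zs2}, the ``vice versa'' inversion, and the parity rule \eqref{parity} for the restriction to $\svbne$ and $\ssln$. Your explicit quasi-inverse $I^\zs=\zP^{-1}\bl(\zF^\zs_E)^{-1}\circ J^\zs\br$ is just a spelled-out version of what the paper leaves implicit, so no genuinely different idea is involved.
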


\section{Polarizations and desuperization of $\bN$-manifolds}

It is easy to see that any $n$-vector bundle $\bl E,\n^1,\dots,\n^n\br$ is canonically also an $\N$-weighted bundle with respect to the weight vector field $\n=\n^1+\cdots+\n^n$. A \emph{polarization} of an $\N$-weighted bundle of degree $n$ is, roughly speaking, a canonical description of an inverse process. This will lead to an equivalence of categories $\nbn$ and $\svbn$ and, consequently, $\nbbn$ and $\svbne$. Composed with the equivalence functor $\Xi$ between symmetric  and skew-symmetric $n$-vector bundles, this also gives the equivalence of categories $\nbn$ and $\ssvbn$, which we call the \emph{reverse polarisation}.

\mn One particular case is of special interest, namely the case of $\bN$-manifolds and $[n]$-vector bundles. Here, the skew-symmetric $n$-vector bundles associated with $\bN$-manifolds are purely even, i.e., they are objects in the traditional purely even differential geometry.
Let $(E,\n)$ be an $\N$-weighted bundle of degree $n$, with local homogeneous coordinates $(u^s,z^A)$ and the weight vector field
$$\n=\sum_Aw_Az^A\pa_{z^A},$$
where $w_A$ are positive integers. We have then the adapted coordinates $(u^s_\za,z^A_\zb)$, $\za,\zb\in\zone$, in the symmetric $n$-vector bundle $\sT^{(n)}E$ with the commuting Euler vector fields
$$\n_E^k=\dt^{k-1}\n_{\sT(\sT^{(n-k)}E)},\quad k=1,\dots,n,$$
as well as the higher tangent lift $\dt^n\n$. All these vector fields are even, complete, and pairwise commuting. It is easy to see that
$$\dt^n\n=\sum_{A,\za}w_Az^A_\za\pa_{z^A_\za}$$
and
$$\n_E^k=\sum_{\za(n-k+1)=1}\Bl\sum_s u^s_\za\pa_{u^s_\za}+\sum_A z^A_\za\pa_{z^A_\za}\Br,$$
so
$$\hat\n=\sum_{k=1}^n\n_E^k=\sum_{\za}|\za|\Bl\sum_s u^s_\za\pa_{u^s_\za}+\sum_A z^A_\za\pa_{z^A_\za}\Br.$$
Now, consider a submanifold $E^{(n)}$ in $\sT^{(n)}E$ given by $\dt^n\n=\hat\n$, i.e.,
\be\label{cond}u^s_\za=0\quad\text{if}\quad |\za|>0\quad\text{and}\quad z^A_\za=0\quad\text{if}\quad |\za|\ne w_A.
\ee
The definition is correct, since the transition maps respect these conditions.
Moreover, the Euler vector fields $\n^k_E$ are tangent to $E^{(n)}$, and conditions (\ref{cond}) are invariant with respect their permutations, so the $\Sn$-action on $\sT^{(n)}E$ reduces to $E^{(n)}$ and turns $E^{(n)}$, with the restricted Euler vector fields $\n_E^k$, into a symmetric $n$-vector bundle. This is exactly the construction introduced in \cite{Bruce:2016a}, applied without any changes to the case of supermanifolds.
\begin{definition}
The symmetric $n$-vector bundle $\bl E^{(n)},\n^1,\dots,\n^n\br$ we call the \emph{polarization} of the $\N$-weighted bundle $E$ of degree $n$.
\end{definition}

\no Since $\sT$ is a functor in the category of supermanifolds, $\sT^{(n)}$ is also such a functor and it is easy to see that in the case when $\zf:E_1\to E_2$ is a morphism of $\N$-weighted bundles of degree $n$, the map $\sT^{(n)}\zf$ restricts to a morphism $\zf^{(n)}:E_1^{(n)}\to E_2^{(n)}$ between symmetric $n$-vector bundles.

\mn Conversely, if $\bl F,\n^1,\dots,\n^n\br$ is a symmetric $n$-vector bundle with an $\Sn$-action permuting the Euler vector fields, then the {\emph{the diagonalization of $F$ } (see \cite{Bruce:2016a})  is defined as the { submanifold } $F^{\Sn}$ 
of fixed points. That is,
$F^{\Sn}$ locally defined in  $\{0, 1\}^n$-graded coordinates $(z^A)$ on $F$ by equations $z^A \circ I^\sigma = z^A$ for all $\sigma\in \Sn$. In  a nice coordinate system $\bl z^A_{\alpha}\br$ on $F$ as described in  Lemma~\ref{l:adapted_coord}, we have
$$F^{\Sn} = \{\bl z^A_{\alpha} \br:   z^A_\alpha = z^A_\beta \text{ whenever } |\alpha| = |\beta| \}.$$
 It is easy to see that $\bl E^{(n)}\br^{\Sn}$, the diagonalization of $E^{(n)}$, is canonically isomorphic with $E$ as $\N$-weighted bundles. The embedding,
 denoted by $\diag: E \to E^{(n)} \subset \sT^{(n)} E$, giving rise to this isomorphism is given by
$$
    z^{A, (\za)} \circ \diag = |\za| \cdot z^A,
$$
where $\w(z^A) = |\za|$, and $\bl  z^{A, (\za)} \br_{\w(z^A) = |\za|}$ is the coordinate system on $E^{(n)}\subset \sT^{(n)} E$. It is clear that the definition of $\diag$ does not depend on the choice of coordinate system $(z^A)$.
}
This way, we get the following.
\begin{theorem}\label{m1}
The associations $\cP^{(n)}E=E^{(n)}$ and $\cP^{(n)}\zf=\zf^{(n)}$ define a functor $\cP^{(n)}$ from the category $\nbn$ of $\N$-weighted bundles of degree $n$ to the category $\svbn$ of symmetric $n$-vector bundles.

This functor, called the \emph{polarization functor}, is an equivalence of categories. The appropriate restriction of this functor yields an equivalence functor from the category of $\nbbn$ of $\bN$-manifolds of degree $n$ into the category $\svbne$ of symmetric  $[n]$-vector  bundles.
\end{theorem}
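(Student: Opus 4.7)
The first task is to confirm that $\cP^{(n)}$ is a well-defined functor. Since $\sT^{(n)}$ is a functor on supermanifolds, and an $\nb$-morphism $\zf:E_1\to E_2$ satisfies $\zf_*(\n)\subset \n'$, each iterated tangent lift $\sT^{(n)}\zf$ relates the lifted even vector fields $\n^k_{E_1}$ to $\n^k_{E_2}$ and also $\dt^n\n^1$ to $\dt^n\n^2$. Hence $\sT^{(n)}\zf$ preserves the cut (\ref{cond}) and restricts to $\zf^{(n)}:E_1^{(n)}\to E_2^{(n)}$; equivariance under $\Sn$ is automatic because the canonical flips on iterated tangent bundles are natural in $M$. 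The natural candidate for a quasi-inverse is the diagonalization functor $\cD:\svbn\to\nbn$, $F\mapsto F^{\Sn}$, which is well defined because the total Euler vector field $\n=\sum_k \n^k$ is $\Sn$-invariant and restricts to a weight vector field on $F^{\Sn}$ of degree $n$.

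Next I construct the unit $\eta_E:E\to\cD\cP^{(n)}E$. The map $\diag:E\to \sT^{(n)}E$ defined by $\diag^*(z^{A,(\alpha)})=|\alpha|\cdot z^A$ lands in $E^{(n)}$ since $|\alpha|=w_A$ is enforced on the image, and it lands in $(E^{(n)})^{\Sn}$ because its pullback depends only on the total weight $|\alpha|$, and is thus invariant under permutations of $\alpha$. That $\diag$ is a morphism of $\N$-weighted bundles is an immediate coordinate check; that it is an isomorphism follows because on each side the sheaf of homogeneous generators has one generator of weight $j$ per coordinate $z^A$ with $w_A=j$, and the induced map on sheaves of homogeneous functions is the identity on these generators.

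The main obstacle is constructing the counit $\varepsilon_F:\cP^{(n)}\cD F\to F$ and proving it is an isomorphism. By Lemma~\ref{l:adapted_coord}, pick nice coordinates $(z^A_\alpha)$ on $F$ with $A\in\Omega_{|\alpha|}$; then $F^{\Sn}$ is locally cut out by $z^A_\alpha=z^A_\beta$ whenever $|\alpha|=|\beta|$, so it carries natural homogeneous coordinates $\ul{z}^A$ with $\w(\ul{z}^A)=|\alpha|$. Polarizing yields coordinates $\ul{z}^{A,(\alpha)}$ on $(F^{\Sn})^{(n)}\subset \sT^{(n)}(F^{\Sn})$ with $|\alpha|=w_A$, matching the index set of $(z^A_\alpha)$ bijectively. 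Define $\varepsilon_F^*(z^A_\alpha):=\frac{1}{|\alpha|!}\,\ul{z}^{A,(\alpha)}$ (the combinatorial factor is fixed by compatibility with $\diag$, so that $\varepsilon_F\circ \eta_{F^{\Sn}}=\id$). Commutation with each $\n^k$ and $\Sn$-equivariance are immediate from (\ref{e:coord_sym}) and the fact that the flips on $\sT^{(n)}(F^{\Sn})$ permute the super-indices $\alpha$ exactly as the original $\Sn$-action permutes the $\alpha$-index in the nice coordinates. The nontrivial point is that $\varepsilon_F$ respects the polynomial nonlinear transition maps of $F$: these are determined by the vector bundle structures $E[\alpha]$ and their cross-couplings, and the iterated tangent functor applied to polynomial transitions on $F^{\Sn}$ reproduces precisely the same couplings on $\sT^{(n)}(F^{\Sn})$, restricted to the polarization. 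This is the purely algebraic identity already established in the purely even case \cite{Bruce:2016a}; in the super setting the same computation goes through because nice coordinates diagonalize the action of $\Sn$ and the Koszul signs are absorbed into the symmetric combinations (\ref{df:zA0}) and (\ref{d:z^a_eps}). Naturality of $\eta$ and $\varepsilon$ in morphisms follows from functoriality of $\sT^{(n)}$ and the fact that morphisms of symmetric $n$-vector bundles preserve fixed-point submanifolds.

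For the restricted equivalence $\nbbn\simeq\svbne$, note that the iterated tangent lift does not change the parity of fiber coordinates: $z^{A,(\alpha)}$ has the parity of $z^A$, which under the $\bN$-manifold hypothesis equals $w_A\bmod 2=|\alpha|\bmod 2$, so $E^{(n)}$ is an $[n]$-vector bundle. Conversely, on an $[n]$-vector bundle $F$, the surviving coordinate $\ul{z}^A$ on $F^{\Sn}$ inherits parity $|\alpha|\bmod 2=\w(\ul{z}^A)\bmod 2$, so $F^{\Sn}$ is an $\bN$-manifold. Since $\nbbn\hookrightarrow \nbn$ and $\svbne\hookrightarrow\svbn$ are full subcategories, the restriction of an equivalence is an equivalence.
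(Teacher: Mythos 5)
Your proposal follows essentially the same route as the paper: the polarization is the cut (\ref{cond}) inside $\sT^{(n)}E$, the quasi-inverse is diagonalization, the unit is the map $\diag$, and the counit/isomorphism in the other direction is built in the nice coordinates of Lemma~\ref{l:adapted_coord}, with compatibility with the polynomial transition maps deferred to the purely even computation of \cite{Bruce:2016a} exactly as the paper does. The only quibble is a normalization: your factor $1/|\alpha|!$ in $\varepsilon_F$ matches the triangle identity only if $\diag$ is normalized by $|\alpha|!$ rather than by the paper's stated $|\alpha|$, but since you correctly note that the constant is dictated by compatibility with $\diag$ (and with the transition maps), this is immaterial.
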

\no If we confront the above result with Theorem \ref{main}, we immediately obtain an equivalence functor $\fL=\Xi\circ\cP$ { from } the category $\nbn$ of $\N$-weighted bundles of degree $n$ into the category $\ssvbn$ of skew-symmetric $n$-vector bundles. If our $\N$-weighted bundle is a $\bN$-manifold, then $E^{(n)}$ is a symmetric $[n]$-vector bundle, and $\Xi(E^{(n)})$ is purely even $n$-vector bundle. Of course, $\fL=\zP\circ \cP^{(n)}$, being the composition of two equivalence functors, is an equivalence functor from the category $\nbn$ of $\N$-weighted bundles into the category $\ssvbn$ of skew-symmetric $n$-vector bundles. We call $\fL$ the \emph{reverse polarization functor}. This way, we get the following.
\begin{theorem}\label{m2}
The functor $\fL=\Xi\circ\cP^{(n)}$ of the reverse polarization is a canonical equivalence functor from the category $\nbn$ of $\N$-weighted bundles of degree $n$ into the category $\ssvbn$ of skew-symmetric $n$-vector bundles. Reduced to the category $\nbbn$ of $\bN$-manifolds of degree $n$, it gives an equivalence functor from $\nbbn$ into the category  $\ssln$ of purely even skew-symmetric $n$-vector bundles, called the \emph{desuperization functor}.
\end{theorem}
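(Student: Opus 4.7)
The plan is to regard Theorem~\ref{m2} as essentially the formal consequence of the two preceding structural results already established: the polarization equivalence of Theorem~\ref{m1} and the total parity reversion equivalence of Theorem~\ref{main}. Since $\fL$ is defined as the composition $\Xi \circ \cP^{(n)}$, and the composition of equivalence functors is itself an equivalence (as equivalences are closed under composition in any $2$-category of categories, being exactly those functors that are essentially surjective, full, and faithful), the first half of the statement should follow immediately without further geometric work once we confirm that the codomain of $\cP^{(n)}$ matches the domain of $\Xi$.

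For the general part of the theorem, I would first recall from Theorem~\ref{m1} that $\cP^{(n)}:\nbn\to\svbn$ is an equivalence, and then apply $\Xi:\svbn\to\ssvbn$ from Theorem~\ref{main}. The resulting composition $\fL=\Xi\circ\cP^{(n)}$ is then an equivalence $\nbn \to \ssvbn$ by the general categorical fact just mentioned. No extra verification about weights, Euler vector fields, or $\Sn$-actions is needed at this stage, because both building blocks are already equivalences of categories in the full supergeometric setting.

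For the restriction to $\bN$-manifolds, the key point I would emphasize is that Theorem~\ref{m1} already provides that $\cP^{(n)}$ maps $\nbbn$ into $\svbne$, the subcategory of symmetric $[n]$-vector bundles (since an $\bN$-manifold has the parity of each coordinate equal to the parity of its weight, a property that is preserved by the tangent functor and by the cutout condition $\dt^n\n=\hat\n$ defining $E^{(n)}$). Then Theorem~\ref{main} guarantees that $\Xi$ restricts to an equivalence $\svbne \to \ssln$, because the total parity reversion $\zP$ turns a coordinate of weight $\alpha\in\zone$ and parity $|\alpha|$ into a purely even coordinate, by formula~\eqref{parity}. Composing these two restrictions yields $\fL\big|_{\nbbn}:\nbbn\to\ssln$, again an equivalence as the composition of two equivalences.

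There is no real obstacle here: the substantive content — the explicit constructions, the verification of the $\Sn$-actions with Koszul signs, the existence of adapted coordinates via Lemma~\ref{l:adapted_coord}, and the identification of the diagonalization with the original $\N$-weighted bundle — is packaged in the earlier results. The only point that deserves an explicit sentence in the write-up is the compatibility of subcategories under the two functors in the restricted case, ensuring that purely even objects in $\ssln$ are exactly the images of $\bN$-manifolds. This follows directly from tracking parities through the definitions, with the essential surjectivity onto $\ssln$ inherited from the essential surjectivities of $\cP^{(n)}\big|_{\nbbn}$ and $\Xi\big|_{\svbne}$.
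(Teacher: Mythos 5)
Your proposal is correct and follows essentially the same route as the paper: the general statement is obtained by composing the equivalences of Theorems~\ref{m1} and~\ref{main}, and the restricted statement by observing that $\cP^{(n)}$ sends $\bN$-manifolds to symmetric $[n]$-vector bundles, on which $\Xi$ (the total parity reversion) produces purely even skew-symmetric $n$-vector bundles. All substantive verifications are indeed contained in the earlier results, exactly as you note.
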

\no Note that the last part of the above theorem recovers the main result by Heuer and Jotz \cite{Heuer:2024}.
\begin{remark} Let $E$ be an $\N$-weighted bundle as above. Then, one easily verifies that
$$
\diag(E) = \sT^n E \cap E^{(n)},
$$
where $\sT^n E \subset \sT^{(n)} E$ denotes the $n$-th order tangent  bundle of the  supermanifold $E$. In other words, the diagonalization of  $E^{(n)}$, being the image of the map $\diag$, is the submanifold of holonomic elements in $E^{(n)}$.  For a geometric perspective on this construction -- where the functor $\sT^n$  is defined via superized versions of curves and their jets using the functor of points approach -- see \cite{Bruce:2014}.
\end{remark}

\addcontentsline{toc}{section}{References}

\small{\vskip.7cm}

\noindent Katarzyna GRABOWSKA\\
Faculty of Physics, University of Warsaw\\
ul. Pasteura 5, 02-093 Warszawa, Poland
\\Email: konieczn@fuw.edu.pl\\
https://orcid.org/0000-0003-2805-1849\\

\noindent Janusz GRABOWSKI\\ Institute of
Mathematics,  Polish Academy of Sciences\\ ul. \'Sniadeckich 8, 00-656 Warszawa, Poland
\\Email: jagrab@impan.pl\\
https://orcid.org/0000-0001-8715-2370\\

\noindent Miko\l aj ROTKIEWICZ\\ Institute of
Mathematics,  University of Warsaw\\ ul. Banacha 2, 02-097 Warszawa, Poland
\\Email: mrotkiew@mimuw.edu.pl\\
https://orcid.org/0000-0002-9510-0463


\begin{thebibliography}{99}
\footnotesize\itemsep=0pt

\bibitem{Aleksandrov:1997} Alexandrov M., Kontsevich M., Schwarz A., and Zaboronsky O.,
The geometry of the {Master} equation and topological quantum field theory, Int.
J. Modern Phys. A \textbf{12} (1997), 1405--1429.

\bibitem{Balduzzi:2011} Balduzzi L.,  Carmeli C., Cassinelli G.,  Super Vector Bundles, J. Phys.: Conf. Ser. \textbf{284} (2011),

\bibitem{Batchelor:1979}
Batchelor M., The structure of supermanifolds, \href{http://dx.doi.org/10.2307/1998201}{\textit{Trans. Amer. Math. Soc.}}
  \textbf{253} (1979), 329--338.

\bibitem{Bonavolonta:2013}
Bonavolont{\`a} G., Poncin N., On the category of {L}ie {$n$}-algebroids,
  \href{http://dx.doi.org/10.1016/j.geomphys.2013.05.004}{\textit{J.~Geom. Phys.}} \textbf{73} (2013), 70--90, \href{http://arxiv.org/abs/1207.3590}{arXiv:1207.3590}.

\bibitem{Bruce:2014} Bruce A.J.,
\newblock{On curves and jets of curves on supermanifolds,}
\newblock{\emph{Arch. Math. (Brno)} \textbf{50} (2014), 115--130.}


\bibitem{Bruce:2015a}
Bruce A.J., Grabowska K., Grabowski J., Higher order mechanics on graded
  bundles, \href{http://dx.doi.org/10.1088/1751-8113/48/20/205203}{\textit{J.~Phys.~A: Math. Theor.}} \textbf{48} (2015), 205203,
  32~pages, \href{http://arxiv.org/abs/1412.2719}{arXiv:1412.2719}.

\bibitem{Bruce:2016}
Bruce A.J., Grabowska K., Grabowski J., Linear duals of graded bundles and
  higher analogues of ({L}ie) algebroids, \href{http://dx.doi.org/10.1016/j.geomphys.2015.12.004}{\textit{J.~Geom. Phys.}} \textbf{101}
  (2016), 71--99, \href{http://arxiv.org/abs/1409.0439}{arXiv:1409.0439}.

\bibitem{Bruce:2024}
Bruce A.J., Grabowski J., Principal bundles in the category of $\Z_2^n$-manifolds,
\emph{arXiv:2412.12652}.

\bibitem{Bruce:2016a} Bruce A.J., Grabowski J., Rotkiewicz M.,
Polarisation of graded bundles,
\emph{SIGMA Symmetry Integrability Geom. Methods Appl.} \textbf{12} (2016), Paper No. 106, 30 pp.

\bibitem{Bursztyn:2025} Bursztyn H., Cueca M., Mehta R.A., A geometric characterization of $\N$-graded manifolds and the Frobenius theorem. (\textit{J. Noncommut. Geom.} (2025), published online first.

\bibitem{Chen:2014}
Chen Z., Liu Z.J., Sheng Y.H., On double vector bundles, \href{http://dx.doi.org/10.1007/s10114-014-2412-4}{\textit{Acta Math.
  Sin.}} \textbf{30} (2014), 1655--1673, \href{http://arxiv.org/abs/1103.0866}{arXiv:1103.0866}.

\bibitem{Covolo:2014a}
Covolo T., Grabowski J., Poncin N., The category of {${\mathbb
  Z}_2^n$}-supermanifolds, \href{http://dx.doi.org/10.1063/1.4955416}{\textit{J.~Math. Phys.}} \textbf{57} (2016), 073503,
  16~pages, \href{http://arxiv.org/abs/1602.03312}{arXiv:1602.03312}.

\bibitem{Covolo:2014b}
Covolo T., Grabowski J., Poncin N., Splitting theorem for
  $\mathbb{Z}_{2}^{n}$-supermanifolds, \href{http://dx.doi.org/10.1016/j.geomphys.2016.09.006}{\textit{J.~Geom. Phys.}} \textbf{110}
  (2016), 393--401, \href{http://arxiv.org/abs/1602.03671}{arXiv:1602.03671}.

\bibitem{Carpio-Marek:2015}
del Carpio-Marek F., Geometric structure on degree 2 manifolds, Ph.D.~Thesis,
  IMPA, Rio de Janeiro, 2015. Available at
\href{http://www.impa.br/wp-content/uploads/2017/05/Fernando_Del_Carpio.pdf}.

\bibitem{Gawedzki:1977}
Gaw\c{e}dzki K., Supersymmetries~-- mathematics of supergeometry, \textit{Ann.
  Inst. H. Poincar\'e Sect.~A} \textbf{27} (1977), 335--366.

\bibitem{Grabowska:2021} Grabowska K., Grabowski J., and Ravanpak Z.,
\newblock{VB-structures and generalizations.}
\newblock{\emph{Ann. Global Anal. Geom.} \textbf{62} (2022), 235--284.}

\bibitem{Grabowska:2024} Grabowska K., Grabowski J.,
\newblock{Graded supermanifolds and homogeneity,}
\newblock{\emph{arXiv:2411.00537}.}

\bibitem{Grabowski:2026} Grabowski J., L\'opez-Gord\'on A.,
A Darboux classification of homogeneous Pfaffian forms on graded manifolds,
\textit{arXiv:2602.04671}.

\bibitem{Grabowski:2009}
Grabowski J., Rotkiewicz M., Higher vector bundles and multi-graded symplectic
  manifolds, \href{http://dx.doi.org/10.1016/j.geomphys.2009.06.009}{\textit{J.~Geom. Phys.}} \textbf{59} (2009), 1285--1305,
  \href{http://arxiv.org/abs/math.DG/0702772}{math.DG/0702772}.

\bibitem{Grabowski:2012}
Grabowski J., Rotkiewicz M., Graded bundles and homogeneity structures,
  \href{http://dx.doi.org/10.1016/j.geomphys.2011.09.004}{\textit{J.~Geom. Phys.}} \textbf{62} (2012), 21--36, \href{http://arxiv.org/abs/1102.0180}{arXiv:1102.0180}.

\bibitem{Grabowski:1995}
Grabowski J., Urba{\'n}ski P., Tangent lifts of {P}oisson and related
  structures, \href{http://dx.doi.org/10.1088/0305-4470/28/23/024}{\textit{J.~Phys.~A: Math. Gen.}} \textbf{28} (1995), 6743--6777,
  \href{http://arxiv.org/abs/math.DG/0701076}{math.DG/0701076}.

\bibitem{Gracia:2009}
Gracia-Saz A., Mackenzie K.C.H., Duality functors for triple vector bundles,
 \href{http://dx.doi.org/10.1007/s11005-009-0346-z}{\textit{Lett. Math. Phys.}} \textbf{90} (2009), 175--200, \href{http://arxiv.org/abs/0901.0203}{arXiv:0901.0203}.

\bibitem{Guarin:2026} Guarin Escudero M.V. and Kotov A., The functor between two categories of $\Z$-graded
manifolds, \textit{arXiv:2602.02420}.

\bibitem{Heuer:2020} Heuer M., Jotz M.,
Multiple vector bundles: cores, splittings and decompositions,
\emph{Theory Appl. Categ.} \textbf{35} (2020), Paper No. 19, 665--699.

\bibitem{Heuer:2024} Heuer M., Jotz M.,
 A geometrisation of N-manifolds
\emph{J. Math. Pures Appl.} (9) \textbf{184} (2024), 1--70.

\bibitem{Joshi:1989} Joshi, K. D.,
Foundations of discrete mathematics,
John Wiley \& Sons, Inc., New York, 1989.

\bibitem{JotzLean:2015}
Jotz~Lean M., $N$-manifolds of degree $2$ and metric double vector bundles,
  \href{http://arxiv.org/abs/1504.00880}{arXiv:1504.00880}.

\bibitem{Jotz:2018} Jotz Lean M.,
The geometrization of N-manifolds of degree 2,
\emph{J. Geom. Phys.} \textbf{133} (2018), 113--140.

\bibitem{Jozwikowski:2016}
J{\'o}{\'z}wikowski M., Rotkiewicz M., A note on actions of some monoids,
  \href{http://dx.doi.org/10.1016/j.difgeo.2016.04.003}{\textit{Differential Geom. Appl.}} \textbf{47} (2016), 212--245,
  \href{http://arxiv.org/abs/1602.02028}{arXiv:1602.02028}.

\bibitem{Konieczna:1999}
Konieczna K., Urba{\'n}ski P., Double vector bundles and duality, \textit{Arch.
  Math. (Brno)} \textbf{35} (1999), 59--95, \href{http://arxiv.org/abs/dg-ga/9710014}{dg-ga/9710014}.

\bibitem{Kostant:1977}
Kostant B., Graded manifolds, graded {L}ie theory, and prequantization, in
  Differential Geometrical Methods in Mathematical Physics ({P}roc. {S}ympos.,
  {U}niv. {B}onn, {B}onn, 1975), \href{http://dx.doi.org/10.1007/BFb0087788}{\textit{Lecture Notes in Math.}}, Vol.~570,
  Springer, Berlin, 1977, 177--306.

\bibitem{Liu:1997}  Liu Z.-J., Weinstein A., Xu P.,
Manin triples for Lie bialgebroids,
\emph{J. Differential Geom.} textbf{45} (1997), 547--574.

\bibitem{Mackenzie:2005}
Mackenzie K.C.H., Duality and triple structures, in The Breadth of Symplectic
  and {P}oisson Geometry, Festschrift in Honor of Alan Weinstein,
  \href{http://dx.doi.org/10.1007/0-8176-4419-9_15}{\textit{Progr. Math.}}, Vol.~232, Birkh\"auser Boston, Boston, MA, 2005,
  455--481.




\bibitem{Pradines:1974}
Pradines J., Repr\'esentation des jets non holonomes par des morphismes
  vectoriels doubles soud\'es, \textit{C.~R.~Acad. Sci. Paris S\'er.~A}
  \textbf{278} (1974), 1523--1526.

\bibitem{Roytenberg:2002}
Roytenberg D., On the structure of graded symplectic supermanifolds and
  {C}ourant algebroids, in Quantization, {P}oisson Brackets and Beyond
  ({M}anchester, 2001), \href{http://dx.doi.org/10.1090/conm/315/05479}{\textit{Contemp. Math.}}, Vol.~315, Amer. Math. Soc.,
  Providence, RI, 2002, 169--185, \href{http://arxiv.org/abs/math.SG/0203110}{math.SG/0203110}.

\bibitem{Severa:2005} {\v{S}}evera P.,
\newblock{Some title containing the words "homotopy" and "symplectic", e.g. this one,}
\newblock{\emph{Travaux math\'ematiques}, Univ. Luxemb., \textbf{16} (2005), 121--137.}
\href{http://arxiv.org/abs/math.SG/0105080}{math.SG/0105080}.

\bibitem{Vishnyakova:2019} Vishnyakova E.,
Graded manifolds of type $\Delta$ and $n$-fold vector bundles,
\emph{Lett. Math. Phys.} \textbf{109} (2019), 243--293.

\bibitem{Vishnyakova:2024} Vishnyakova E.,
Multiplicity free covering of a graded manifold, \emph{arXiv:2409.02211}.

\bibitem{Voronov:2002}
Voronov Th.Th., Graded manifolds and {D}rinfeld doubles for {L}ie bialgebroids,
  in Quantization, {P}oisson Brackets and Beyond ({M}anchester, 2001),
  \href{http://dx.doi.org/10.1090/conm/315/05478}{\textit{Contemp. Math.}}, Vol.~315, Amer. Math. Soc., Providence, RI, 2002,
  131--168, \href{http://arxiv.org/abs/math.DG/0105237}{math.DG/0105237}.


\bibitem{Voronov:2012}
Voronov Th.Th., {$Q$}-manifolds and {M}ackenzie theory, \href{http://dx.doi.org/10.1007/s00220-012-1568-y}{\textit{Comm. Math.
  Phys.}} \textbf{315} (2012), 279--310, \href{http://arxiv.org/abs/1206.3622}{arXiv:1206.3622}.

\bibitem{Yano:1973}
Yano K., Ishihara S., Tangent and cotangent bundles: differential geometry,
  \textit{Pure and Applied Mathematics}, Vol.~16, Marcel Dekker, Inc., New
  York, 1973.

\end{thebibliography}
\end{document}